\newcommand{\mathsym}[1]{{}}
\newtheorem{lemma}{Lemma}[section]
\newtheorem{theorem}[lemma]{Theorem}
\newtheorem{definition}{Definition}[section]
\newtheorem{remark}{Remark}[section]
\newtheorem{corollary}{Corollary}[section]
\numberwithin{equation}{section} 
\title[Congruence properties of induced representations]{Congruence properties of induced representations and their applications }
\author[]{Dieter Mayer}
\address{Lower Saxony Professorship\\TU Clausthal\\38678 Clausthal-Zellerfeld}
\email{dieter.mayer@tu-clausthal.de}
\author[]{Arash Momeni}
\address{Institute of Theoretical Physics\\TU Clausthal\\38678 Clausthal-Zellerfeld}
\thanks{This work was supported by the Danish National Research Foundation Centre of Excellence ''Centre for Quantum Geometry of Moduli Spaces (QGM)'' and the Volkswagenstiftung through a Lower Saxony Professorship}
\thanks{Arash Momeni would like to thank the Centre for Quantum Geometry for Moduli Spaces (QGM) for the financial support, kind hospitality and providing good working conditions.}
\email{arash.momeni@tu-clausthal.de}
\author[]{Alexei Venkov}
\address{Institute for Mathematics and Centre for Quantum Geometry (QGM)\\University of Aarhus\\ 8000 Aarhus C}
\email{venkov@imf.au.dk}
\subjclass[2000]{Primary 11F70 ;Secondary 30F35, 20H05, 20C15, 20H10 }
\keywords{Selberg's character, induced representation, congruence character, congruence representation}
\begin{document}
\begin{abstract}
In this paper we study congruence properties of the representations $U_\alpha:=U^{PSL(2,\mathbb{Z})}_{\chi_\alpha}$ of the projective modular group ${\rm PSL}(2,\mathbb{Z})$ induced from a family $\chi_\alpha$ of characters for the Hecke congruence subgroup $\Gamma_0(4)$ basically introduced by A. Selberg. Interest in the representations $U_\alpha$ stems  from their appearance in the transfer operator approach to Selberg's zeta function for this Fuchsian group and character $\chi_\alpha$.  Hence the location of the nontrivial zeros of this function and therefore also the spectral properties of the corresponding automorphic Laplace-Beltrami operator $\Delta_{\Gamma,\chi_\alpha}$ are closely related to their congruence properties. Even if as expected these properties of $U_\alpha$ are easily shown to be equivalent to the ones well known for the characters $\chi_\alpha$, surprisingly, both the congruence and the noncongruence groups  determined by  their kernels  are quite different: those determined by $\chi_\alpha$ are character groups of type I  of the group $\Gamma_0(4)$, whereas those determined by $U_\alpha$ are such character groups of $\Gamma(4)$. Furthermore, contrary to infinitely many of the groups  $\ker \chi_\alpha $, whose noncongruence properties follow simply from  Zograf's geometric method together with Selberg's lower bound for the lowest nonvanishing eigenvalue of the automorphic Laplacian, such arguments do not apply to the groups $\ker U_\alpha$, the reason being, that they can have arbitrary genus $g\geq 0$, contrary to the groups $\ker \chi_\alpha$, which all have genus $g=0$.
\end{abstract}
\maketitle
\section{Introduction}
Whereas for congruence subgroups $\Gamma$ of the modular group ${\rm PSL}(2,\mathbb{Z})$ one expects according to the general Riemann hypothesis, that the zeros of the Selberg zeta function $Z_\Gamma (s)$ in the half plane $\Re s<\frac{1}{2}$ are located on a few lines parallel to the imaginary axis, for noncongruence 
subgroups not much is known and one even expects them to be distributed rather erratically. To understand this distribution better, A. Selberg studied in \cite{S1} the location of the poles of the Eisenstein series $E_i(z,s;\tilde{\chi}_\eta)$ for the Hecke congruence 
subgroup $\Gamma_0(4)$ and a $1$-parameter family of characters $\tilde{\chi}_\eta, -\pi/2\leq \eta\leq \pi/2$, with $\tilde{\chi}_0$ the trivial character. He got a remarkable, but often overlooked result concerning the location of the zeros of his zeta function   $Z_{\Gamma_0(4)}(s,\tilde{\chi_\eta})$,  and hence also for the location of the resonances of the automorphic Laplacian $\Delta_{\tilde{\chi}_\eta}$ in the half plane 
$\Re s <\frac{1}{2}$. When written as a dynamical zeta function the Selberg function has the form
$$Z_{\Gamma_0(4)}(s,\tilde{\chi_\eta})=\prod_{k=0}^\infty \prod_{\{\gamma\}} (1-\tilde{\chi}_\eta  (g_\gamma) e^{-(s+k)l_\gamma}),\, \Re s>1/2,$$
 with $\{\gamma\}$ the prime periodic orbits of the geodesic flow on the unit tangent bundle of the Hecke surface $\Gamma_0(4)/\mathbb{H} $ and $g_\gamma$ an hyperbolic element in $\Gamma_0(4)$ determining the periodic orbit $\gamma$. Selberg showed that there is no limitation how close to the line $\Re s =\frac{1}{2}$ for $\eta \to 0$, or how far away from it its zeros may lie for $\eta\to \pm \pi/2$. This result obviously is closely related to the fact that for each $\eta\neq 0$ the multiplicity of the continuous spectrum of the automorphic Laplacian $\Delta_{\Gamma_0(4),\tilde{\chi}_\eta }$ is equal to one and for $\eta=0$ this multiplicity is equal to three. The behaviour for $\eta\to \pm \pi/2$ on the other hand is related to the congruence properties of $\tilde{\chi}_\eta$ for these parameter values.  From 
the point 
of view of the spectral theory of automorphic functions the family of characters $\tilde{\chi}_\eta $ is singular when $\eta$ tends to $0$ .  An analogous family $\tilde{\chi}_\alpha$ of characters for the principal congruence subgroup $\Gamma(2)$, which is conjugate to the Hecke congruence subgroup $\Gamma_0(4)$, but with a slightly different normalization of the parameter $ 0\leq \alpha\leq 1$ such that $\tilde{\chi}_0=\tilde{\chi}_1\equiv 1$, was studied in their work on the existence of Maass cusp forms in non-arithmetic situations also by R. Phillips and P. Sarnak in \cite{Phillips-Sarnak}.
 Thereby they showed, obviously unaware of an older result by M. Newman \cite{Newman2}, that the character $\tilde{\chi}_\alpha$ is congruent, that means its kernel is a congruence subgroup with $\ker\tilde{\chi}_\alpha\geq \Gamma(N)$ for some $N$, iff $\alpha = j/8,\, 0\leq j\leq 8$. The explicit form of these kernels (to be precise of still another family of conjugate characters $\chi_\alpha $ of $\Gamma(2)$ with $\chi_0=\chi_1\equiv 1$) has been determined by E. Balslev and A. Venkov in \cite{Balslev-Venkov}. For irrational $\alpha$ the kernels $\ker \chi_\alpha$ are easily seen to be subgroups of infinite index in $\Gamma(2)$ and hence the characters in this case are not congruent. 
For rational $\alpha=n/d$, $d>n$ and $(n,d)=1$ on the other hand E. Balslev and A. Venkov constructed in \cite{Balslev-Venkov} a system of generators for the group $\ker\chi_\alpha$, which for $d=2, 3, \ldots$  is a cofinite, normal subgroup of $\Gamma (2)$ of index $d$ but not normal in ${\rm PSL}(2,\mathbb Z)$  \cite{Newman2}. Indeed, the group is generated by $d+2$ parabolic elements $S_1,S_2,\ldots,S_{d+2}$ with one relation
\begin{equation}
      S_1S_2\ldots S_{d+2}=Id_{2\times2}.
\end{equation}
Since for $\alpha=n/d$, $(n,d)=1$,  $\ker\chi_\alpha$ turns out to be independent of $n$ ,  it can be simply denoted by $\Gamma_d$.
Then $\Gamma_d$ has signature $(h;m_1,m_2,\ldots, m_k;p)=(0; d+2)$  \cite{Alexei}, where as usual $2h$ denotes the number of  hyperbolic, $k$  the number of elliptic and $p$  the number of  parabolic generators, respectively $m_j$  the order of the elliptic generator $e_j$.
By Gauss-Bonnet its fundamental domain $F_d$ then has hyperbolic area 
  
\begin{equation}
A_d=2\pi(2h-2+\sum_{j=1}^k (1-1/m_j)+p)=2\pi d.
\end{equation}

The groups $\Gamma_d$, as defined by Balslev and Venkov, coincide with the groups $\Gamma_{6d}$ of G. Sansone \cite{Sansone} and studied  in \cite{Newman2} by M. Newman. He solved indeed the congruence problem for these subgroups  of  $\Gamma(2)$ by  showing  $\Gamma_{6d}$ to be congruent only for $d=1,2,4,8$ with $\Gamma(2d)\leq\Gamma_{6d}$. For the character $\chi_\alpha$ of $\Gamma (2)$ this is just the above mentioned result of Phillips and Sarnak respectively Balslev and Venkov.   \newline
 In a recent paper \cite{BFM}  R. Bruggeman et al. studied in more detail the behaviour of the zeros of the Selberg zeta function $Z_{\Gamma_0(4)}(s,\chi_\alpha)$ for the group $\Gamma_0(4)$ and a family of characters  conjugate to the one of Balslev and Venkov for $\Gamma(2)$. For simplicity we denote this family again by $\chi_\alpha$. This work was initiated by numerical results in the thesis of M. Fraczek, who was able to trace the zeros of this zeta function as a function of $\alpha$ by using its 
representation in terms of the Fredholm determinant $\det(1-\mathcal{L}_{\alpha,s})$ of a family of transfer operators $\mathcal{L}_{\alpha,s}$ \cite{F, Fraczek-Mayer}. These operators are determined by the geodesic flow on the Hecke surface $\Gamma_0(4)/\mathbb{H}$ and the unitary 
representations $U_\alpha:=U^{PSL(2,\mathbb{Z})}_{\chi_\alpha}$ of the  modular group ${\rm PSL}(2,\mathbb Z)$ induced from the family $\chi_\alpha$  of characters of the Hecke congruence subgroup $\Gamma_0(4)$. One of their results is a more detailed description of Selberg's 
phenomenon of accumulation of the resonances on the critical line $\Re s=1/2$ for noncongruent values $\alpha$ of the character when approaching the trivial congruent character $\chi_0$. Also Selberg's  resonances tending on lines asymptotically 
equidistant and parallel to the real axis to $\Re s =-\infty$ for $\alpha$ approaching the congruent value $\alpha=1/4$ \cite{Balslev-Venkov1} could be confirmed numerically by Fraczek in \cite{F}.  At the same time his numerical calculations confirm  $\alpha=j/8,\, 0\leq j\leq 8$ as the only congruent values for the induced representation $U_{\chi_\alpha}$  as for the character $\chi_\alpha$. 
Indeed it is not difficult to show that, as expected, $\ker U_\alpha$ is congruent iff $\ker\chi_\alpha$ is congruent.  These two families of groups however have quite different properties: whereas the groups $\ker\chi_\alpha$, which for rational $\alpha=n/d,\, n<d$ are conjugate to the groups $\Gamma_d$ and therefore all have vanishing genus, the groups $\ker U_\alpha$ can have arbitrary large genus.  Hence, contrary to the former ones their noncongruence nature cannot be deduced simply by applying a recent geometric result of  P. Zograf \cite{Zograf3, Zograf1, Zograf2} based on previous results of Yang and Yau \cite{Y.-Y.} respectively Hersh \cite{Hersch}, together with A. Selberg's famous lower bound for the eigenvalues of congruence subgroups \cite{Selberg65}. Instead one has to use some different algebraic approach based on Wohlfahrt's notion of level for general subgroups of the modular group. That there are infinitely many  noncongruence groups even in any fixed genus $g\geq0$ has been shown by G. Jones \cite{Jones}.\newline
Many of the known examples of noncongruence subgroups are so called character groups, that means, appear as the kernel of some group 
epimorphism of a  congruence subgroup onto some finite abelian group or equivalently, as a normal finite index subgroup of a 
congruence group with abelian quotient. Most such examples are kernels of unitary characters like the lattice groups of Rankin \cite{Ran} or 
the noncongruence groups of Klein and Fricke \cite{Fri}. Such character groups play an important role in the theory of modular forms for 
noncongruence subgroups and there especially for the Atkin-Swinnerton-Dyer congruence relations (see for instance the series of papers 
\cite{LLY}, \cite{ALL}, \cite{Kurth-Long}, and \cite{Long}). Obviously the groups $\Gamma_d$ for noncongruence values of $d$ belong to this class of 
noncongruence groups. On the other hand  the group $\ker U_\alpha$ for a noncongruence rational value of $\alpha$ 
will be shown to be the kernel of an epimorphism $\phi_\alpha: \Gamma(4)\to A_\alpha$, where $A_\alpha$ is a finite abelian group freely 
generated by three six-dimensional matrices each of order $N(\alpha)$, which for $\alpha=p/q$ and $(p,q)=1$ is given by $N(\alpha)= \
min 
\{n: q | 4 n\}$. To our knowledge not many examples of such noncongruence character groups arising as kernels of 
epimorphisms onto non-cyclic finite abelian groups have been discussed in the literature.\newline
The paper is organized as follows. In chapter \ref{ch:ind} we recall Selberg's character $\chi_\alpha$ for the group $\Gamma_0(4)$ and the $6$-dim. monomial representation $U_\alpha$ of the modular group ${\rm PSL}(2,\mathbb Z)$
induced from $\chi_\alpha$.  We  briefly recall Selberg's upper bound for the  smallest eigenvalue $\lambda_1$ of the automorphic Laplace-Beltrami operator $\Delta_\Gamma$ for congruence subgroups $\Gamma$ and Zografs lower bound for this eigenvalue for general finite index subgroups of the modular group.  We show how 
these results imply the noncongrunce property of infintely many of the groups $\Gamma_d$ defined by the kernel $\ker \chi_\alpha$ for $\alpha=n/d$. In chapter \ref{ch:image} we introduce the groups $G_\alpha=U_\alpha({\rm PSL}(2,\mathbb Z))$, $0\leq\alpha\leq 1$  
and  relate them to the group $G_0$.  We determine the kernel $\ker U_0$ of the representation $U_0$ as the principal congruence subgroup $\Gamma(4)$ and show that $\ker U_\alpha \leq \Gamma(4)$. We introduce the groups 
$A_\alpha=U_\alpha(\ker U_0)$ which are finitely generated abelian normal subgroups of $G_\alpha$ and of finite 
order iff $\alpha$ is rational. The factor group $G_\alpha/A_\alpha$ on the other hand is isomorphic to the group $G_0$,  which itself by a Theorem of M. Millington \cite{Mi} is isomorpic to the modulary group $G(4)={\rm PSL}(2,\mathbb{Z})/\Gamma(4)$. Chapter \ref{ch:congruence} 
contains the main results of this paper: namely a complete characterisation for rational $\alpha$ of the groups $\ker U_\alpha$ by determining their index in $\Gamma(4)$, their genus, the number of their cusps, their Wohlfahrt level and the number of their free generators. We show that $\ker U_\alpha$ is congruent iff the character $\chi_\alpha$ is congruent. Independently from this result, using the above group datas  leads to another determination of those $\alpha$-values for which $\ker U_\alpha$
is congruent. Indeed for these values the congruence group $\ker U_\alpha$ either coincides with the  principle subgroup $\Gamma(4)$ or with $\Gamma(8)$ depending just on the order of the abelian group $\Gamma(4)/\ker U_\alpha$.

\section{Selberg's character $\chi_\alpha$ for $\Gamma_0(4)$ and the  induced representation $U_\alpha$ of ${\rm PSL}(2,\mathbb Z)$}
The projective modular group ${\rm PSL}(2,\mathbb Z)$ is defined as   
\begin{equation}
{\rm PSL}(2,\mathbb Z)=\left\lbrace \left( \begin{array}{cc}
a&b\\
c&d\\
\end{array}
\right)\mid a,b,c,d\in \mathbb Z,ad-bc=1\right\rbrace/\left\lbrace  \pm Id\right\rbrace.
\end{equation}
This group is generated by the elements 
\begin{equation}
T=\pm\left( \begin{array}{cc}
1&1\\
0&1\\
\end{array}
\right)\qquad \rm{and}\qquad\quad S=\pm\left( \begin{array}{cc}
0&-1\\
1&0\\
\end{array}
\right),
\end{equation}
with the relations $S^2=(ST)^3=\pm Id$.
Many of the noncongruence subgroups of the modular group ${\rm PSL}(2,\mathbb Z)$ discussed in the literature are so called  character groups of congruence subgroups $G$ of the modular group . Quite generally one calls a subgroup $\Gamma$ of an arithmetic subgroup $G$, that is $G\leq{\rm PSL}(2,\mathbb Z)$ of finite index, a character group of $G$ if $\Gamma=\ker\phi$ for some group epimorphism $\phi: G\rightarrow A$ onto a finite abelian group $A$. 
The character group $\Gamma$ is of type $\rm I$ if $\phi(g)\neq id$ for some parabolic element $g\in G$, otherwise it is of type $\rm I\rm I$. The groups $\Gamma_d$ are obviously character groups of the congruence subgroup $\Gamma_0(4)\leq {\rm PSL}(2,\mathbb Z)$ of type $\rm I$, since $\Gamma_d=\ker \chi_\alpha$, $\alpha=\frac{n}{d}$, $n<d$, $(n,d)=1$.

The principal congruence subgroup $\Gamma(n)$ of level $n$ is defined as
\begin{equation}\label{pcs}
\Gamma(n):=\left\lbrace \gamma\in {\rm PSL}(2,\mathbb Z)\vert \gamma\equiv\pm\left( \begin{array}{cc}
1&0\\
0&1\\
\end{array}
\right)\mod n \right\rbrace.
\end{equation}
The index $\mu_{\Gamma(n)}$ of $\Gamma(n)$ in ${\rm PSL}(2,\mathbb Z)$ is given by (see for example \cite{Rankin})
\begin{equation}
\mu_{\Gamma(n)}=\left[ {\rm PSL}(2,\mathbb Z):\Gamma(n)\right]=\begin{cases} \dfrac{1}{2}n^3\prod_{p\vert n}(1-\dfrac{1}{p^2}) \quad \text{for}\; n>2\\
6 \quad \quad \quad \quad \quad \quad \quad \quad \;\text{for}\; n=2 \end{cases}.
\end{equation}
where $p$ runs over all primes dividing $n$.
The Hecke congruence subgroup $\Gamma_0(n)$ of ${\rm PSL}(2,\mathbb Z)$ on the other hand is defined by
\begin{equation}
\Gamma_0(n)=\left\lbrace \gamma\in {\rm PSL}(2,\mathbb Z)\,\vert\,c=0 \mod n\right\rbrace.
\end{equation}
Its index $\mu_{\Gamma_0(n)}$ in ${\rm PSL}(2,\mathbb Z)$ is given by 
\begin{equation}
\mu_{\Gamma_0(n)}=n \prod_{p\vert n} (1+\dfrac{1}{p})
\end{equation}
In the following we are mostly interested in the case $n=4$. In this case $\Gamma_0(4)$ is freely generated by
\begin{equation}
T=\pm\left( \begin{array}{cc}
1&1\\
0&1\\
\end{array}
\right)\qquad {\rm and}\qquad\quad ST^4S=\pm\left( \begin{array}{cc}
-1&0\\
4&-1\\
\end{array}
\right).
\end{equation}
and $\mu_{\Gamma_0(4)}= 6$.
As representatives of the right cosets $\Gamma_0(4)\backslash{\rm PSL}(2,\mathbb Z)$ of $\Gamma_0(4)$ in ${\rm PSL}(2,\mathbb Z)$ we choose the following set $R$ of elements of ${\rm PSL}(2,\mathbb Z)$
\begin{equation}\label{Rep.4}
 R=\left\lbrace  Id, S, ST, ST^2, ST^3, ST^2S\right\rbrace.
\end{equation}
Selberg's character 
\begin{equation}
\chi_\alpha:\Gamma_0(4)\rightarrow {\rm Aut} \,\mathbb C, \quad 0\leq\alpha\leq1,
\end{equation}
for the group $\Gamma_0(4)$ is defined by the following assignments to the above generators:
\begin{equation}
 \chi_\alpha(T)=\exp(2\pi i\alpha),\quad \chi_\alpha(ST^4S)=1.
\end{equation}
This corresponds to the family of conjugate characters  considered by Balslev and Venkov for the conjugate group $\Gamma(2)$ in \cite{Balslev-Venkov}. Since $\chi_0=\chi_1$ and $\chi_{1-\alpha}=\chi_\alpha^*$ we can restrict ourselves in the following to the parameter range $0\leq \alpha\leq 1/2$.
 The representation $U_\alpha:= U^{\rm PSL(2,\mathbb Z)}_{\chi_\alpha}$ of $\rm PSL(2,\mathbb Z)$ induced from Selberg's character $\chi_\alpha$ for $\Gamma_0(4)$ for our choice $R$ of representatives  of $\Gamma_0(4)\backslash{\rm PSL}(2,\mathbb Z)$ is then given by
\begin{equation}\label{rep.-U}
\left[ U_\alpha(g)\right]_{i,j}=\delta_{\Gamma_0(4)}(r_i g r_j^{-1}) \chi_\alpha (r_i g r_j^{-1}),\quad r_i\in R,\quad 1\leq i,j \leq6
\end{equation}
where
\begin{equation}\label{delta}
\delta_{\Gamma_0(4)}(\gamma)=\begin{cases}
                                          1&\gamma\in\Gamma_0(4),\\
					  0&\gamma\not\in\Gamma_0(4).
                                         \end{cases}
\end{equation}
Obviously $U_\alpha(g)$ is a $6$-dimensional monomial matrix, that means, has only one nonvanishing entry in every row and column. For $\alpha=0$ the matrix $U_0(g)$ becomes  a permutation matrix with all nonvanishing entries identical one, that is \begin{equation}\label{indtriv}
 \left[U_0(g)\right]_{ij}=\delta_{\Gamma_0(4)}(r_igr_j^{-1}),\quad r_i\in R,\quad 1\leq i,j\leq6.
\end{equation}

For the generators $S$ and $T$ of $\rm PSL(2,\mathbb Z)$ one then finds : 
\begin{equation}
U_\alpha(S)=\left(
\begin{array}{cccccc}
 0 & 1 & 0 & 0 & 0 & 0 \\
 1 & 0 & 0 & 0 & 0 & 0 \\
 0 & 0 & 0 & 0 & \exp(-2\pi i\alpha) & 0 \\
 0 & 0 & 0 & 0 & 0 & 1 \\
 0 & 0 & \exp(2\pi i\alpha) & 0 & 0 & 0 \\
 0 & 0 & 0 & 1 & 0 & 0
\end{array}
\right)\end{equation}
and
\begin{equation}U_\alpha(T)=\left(
\begin{array}{cccccc}
 \exp(2\pi i\alpha)& 0 & 0 & 0 & 0 & 0 \\
 0 & 0 & 1 & 0 & 0 & 0 \\
 0 & 0 & 0 & 1 & 0 & 0 \\
 0 & 0 & 0 & 0 & 1 & 0 \\
 0 & 1 & 0 & 0 & 0 & 0 \\
 0 & 0 & 0 & 0 & 0 & \exp(-2\pi i\alpha)
\end{array}
\right).
\end{equation}

\begin{definition}
The character $\chi_\alpha$ respectively the representation $U_\alpha$ is a congruence character respectively a congruence representation or shortly congruent iff its kernel $\ker \chi_\alpha$ respectively $\ker U_\alpha$ is a congruence subgroup, that means $\Gamma(N)\leq \ker \chi_\alpha$ respectively $\Gamma(N)\leq \ker U_\alpha$ for some $N$.
\end{definition}
It is well known from the work of M. Newman \cite{Newman2} and later reproved by several other authors, that only for $\alpha = j/8,\, 0\leq j \leq 4$ the character $\chi_\alpha$ is congruent. Obviously, for $\alpha$ irrational the group $\ker \chi_\alpha$ has infinite index in the modular group and hence cannot be congruent. On the other hand  the group $\ker \chi_\alpha$ for rational $\alpha=n/d, \, 1\leq n\leq d-1$, does not depend on $n$ and can hence can be denoted by $\Gamma_d$. Balslev and Venkov showed in \cite{Balslev-Venkov}, that $\Gamma_d$ has vanishing genus and the area $A_d$ of its its fundamental domain $F_d $ is given by $A_d= 2\pi d$.  That  $\Gamma_d$ can be congruent only for finitely many values of $d$ follows indeed already from a remarkable geometric result of P. Zograf  \cite{Zograf1}, \cite{Zograf2} based on previous work of Yang and Yau \cite{Y.-Y.} respectively Hersh \cite{Hersch}, together with A. Selberg's famous lower bound for the eigenvalues of the automorphic Laplacian $\Delta_\Gamma$ for  congruence subgroups $\Gamma$ \cite{Selberg65} (for more recent lower bounds see \cite{Sar}). Let us briefly recall these results:
\label{ch:ind}
\begin{theorem}[Zograf]\label{tttt234}
Let $\Gamma$ be a discrete cofinite subgroup of ${\rm PSL}(2,\mathbb R)$ of signature $(h;m_1,m_2,\ldots, m_k;p)$ and genus $g$.  Let $A(F)$ be the hyperbolic area of its fundamental domain $F$. Assume $A(F)\geq 32 \pi(g+1)$. Then the set of eigenvalues of the automorphic Laplacian $\Delta_\Gamma$ in $(0,1/4)$ is not empty and
\begin{equation}
\lambda_1 <\dfrac{8\pi(g + 1)}{A(F)}
\end{equation}
where $\lambda_1>0$ is the smallest non zero eigenvalue of $\Delta_\Gamma$.
\end{theorem}
On the other hand Selberg proved the following lower bound for the smallest non vanishing eigenvalue $\lambda_1$ for any congruence subgroup 
\begin{theorem}[Selberg]
Let $\Gamma$ be a congruence subgroup of ${\rm PSL}(2,\mathbb Z)$. Then
\begin{equation}
      (3/16)\leq\lambda_1.
\end{equation}

\end{theorem}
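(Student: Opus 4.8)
The plan is to prove the contrapositive in quantitative form: a congruence subgroup cannot carry an exceptional eigenvalue below $3/16$. Writing the eigenvalue as $\lambda=s(1-s)$, an eigenvalue $\lambda_1<3/16$ corresponds via the larger root to a spectral parameter $s_1=\frac12+\sqrt{\frac14-\lambda_1}>\frac34$. First I would reduce the claim to the principal congruence subgroups $\Gamma(N)$. Since every congruence subgroup $\Gamma$ contains some $\Gamma(N)$, a function invariant under the larger group $\Gamma$ is in particular invariant under $\Gamma(N)$, so $L^2(\Gamma\backslash\mathbb{H})\subseteq L^2(\Gamma(N)\backslash\mathbb{H})$ and every nonzero eigenvalue of $\Delta_\Gamma$ is also one of $\Delta_{\Gamma(N)}$. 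Hence $\lambda_1(\Gamma)\geq\lambda_1(\Gamma(N))$, and a lower bound for $\Gamma(N)$ transfers to $\Gamma$. It therefore suffices to bound the smallest nonzero (cuspidal) eigenvalue for $\Gamma(N)$.

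Next I would introduce the nonholomorphic Poincaré series $P_m(z,s)=\sum_{\gamma}(\Im\gamma z)^s\,e(m\,\Re\gamma z)$ attached to a cusp and compute its Fourier expansion at a (possibly different) cusp. The arithmetic content of the $n$-th Fourier coefficient is the Kloosterman sum $S(m,n;c)=\sum_{d}e\!\left(\tfrac{m\bar d+nd}{c}\right)$, summed over the moduli $c$ occurring for the group. Assembling these, I would form the Kloosterman--Selberg Dirichlet series
\[
Z_{mn}(s)=\sum_{c}\frac{S(m,n;c)}{c^{2s}} .
\]
The spectral mechanism is that, after pairing $P_m$ against the spectral decomposition of $L^2(\Gamma\backslash\mathbb{H})$ (equivalently, testing the resolvent $(\Delta+s(1-s))^{-1}$ against $P_m$), the series $Z_{mn}(s)$ continues meromorphically and acquires poles exactly at the exceptional spectral parameters $s_j>\tfrac12$, with residues proportional to the products $\rho_j(m)\,\overline{\rho_j(n)}$ of the Fourier coefficients of the associated Maass cusp forms.

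The decisive arithmetic input, and the only place where the congruence hypothesis enters, is Weil's bound for Kloosterman sums, $|S(m,n;c)|\leq d(c)\,(m,n,c)^{1/2}\,c^{1/2}$. This forces $Z_{mn}(s)$ to converge absolutely, and so be holomorphic, in the half-plane $\Re s>\tfrac34$, since $\sum_c c^{1/2+\varepsilon}c^{-2s}$ converges there. The exponent $3/16$ is precisely $\tfrac34\cdot\tfrac14$, the value of $s(1-s)$ at the abscissa $s=\tfrac34$ dictated by the square-root exponent in Weil's estimate; it is exactly here that arithmeticity is exploited, since for a general Fuchsian group the generalized Kloosterman sums obey no such cancellation.

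Finally I would derive the contradiction. If some eigenvalue satisfied $\lambda_1<3/16$, the associated parameter $s_1>\tfrac34$ would be a pole of $Z_{mn}(s)$ inside the region of holomorphy just established -- impossible, provided the residue is nonzero. To secure nonvanishing I would choose the index $m$ (or take a suitable finite combination over cusps and indices) so that the coefficient $\rho_1(m)$ of the offending cusp form does not vanish, which is possible because a nonzero cusp form has a nonzero Fourier coefficient at some cusp. This shows the exceptional cuspidal spectrum of $\Gamma(N)$ is empty below $3/16$, and by the first step the bound $\lambda_1\geq 3/16$ holds for every congruence subgroup. I expect the genuinely hard part to be the rigorous meromorphic continuation of $Z_{mn}(s)$ and the exact identification of its residues (the Selberg--Kuznetsov spectral identity); by contrast, the convergence estimate from Weil's bound and the concluding contradiction are routine once that machinery is in place.
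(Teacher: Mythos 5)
The paper does not prove this statement: it is quoted as Selberg's theorem with a citation to his 1965 paper, so there is no internal proof to compare against. Your sketch is the standard argument from that source — reduction to $\Gamma(N)$, Poincar\'e series, the Kloosterman--Selberg series $Z_{mn}(s)$ with poles at exceptional spectral parameters, and Weil's bound forcing holomorphy in $\Re s>\tfrac34$ — and it is correct in outline; the numerology $3/16=\tfrac34\cdot\tfrac14$ is exactly right. The one genuine omission is that your argument controls only the \emph{cuspidal} exceptional spectrum, whereas $\lambda_1$ as used in the paper is the smallest nonzero eigenvalue of $\Delta_\Gamma$, which a priori could also be a residual eigenvalue coming from a pole of an Eisenstein series at some $s\in(\tfrac12,1)$. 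For congruence subgroups one rules this out separately: the scattering matrix of $\Gamma(N)$ is expressed in terms of Dirichlet $L$-functions, whose only relevant pole in $(\tfrac12,1]$ is at $s=1$ (giving $\lambda=0$, the constant function). With that standard supplement your proof is complete; the heavy lifting is, as you say, the meromorphic continuation of $Z_{mn}(s)$ and the identification of its residues.
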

Selberg's sharper eigenvalue conjecture for congruence subgroups is indeed $\lambda_1\geq 1/4$ (see \cite{Selberg65}). Notice that the interval $[0,1/4)$ is free from the continuous spectrum of the automorphic Laplacian $\Delta_\Gamma$, which is real and given by $[1/4,\infty)$.
If we now combine these two theorems, we get for congruence subgroups
\begin{equation}\label{zzz}
3/16 < \dfrac{8\pi( g + 1)}{A(F)}
\end{equation}
If we assume, that for a given $d$ the group $\Gamma_d$, which has vanishing genus $g$, is a congruence subgroup, we get from \eqref{zzz} that $3/16 <8\pi/2\pi d$ or $d < 64/3$ and hence there are only finitely many $d$ with $\Gamma_d$ a congruence subgroup.

\section{The groups $U_\alpha({\rm PSL}(2,\mathbb Z)$ and $U_\alpha(\Gamma(4))$} \label{ch:image}
Denote by $G_\alpha$ the group $G_\alpha=U_\alpha({\rm PSL}(2,\mathbb Z))$ determined by the induced representation $U_\alpha$. Since ${\rm PSL}(2,\mathbb Z)$ is generated by $S$ and $T$, the group $G_\alpha$ is generated by $U_\alpha(S)$ and $U_\alpha(T)$. Furthermore we denote by $M(6,\mathbb C)$ the group of all monomial matrices and by $\Delta(6,\mathbb C)$ the group of all diagonal matrices in $\rm GL(6,\mathbb C)$.
It is well known that $M(6,\mathbb C)$ is the normalizer of $\Delta(6,\mathbb C)$ in $\rm GL(6,\mathbb C)$ (see for instance \cite{Alperin}, page 48, Exercise 7). Hence, $\Delta(6,\mathbb C)$ is obviously normal in $M(6,\mathbb C)$ . Denote furthermore by $W$ the set of all $6$- dimensional permutation matrices in $\rm GL(6,\mathbb C)$. It is a subgroup of $\rm GL(6,\mathbb C)$ which is also called the Weyl group. The group $W$ is obviously isomorphic to $S_6$, the symmetric group of degree $6$. Then the group $M(6,\mathbb C)$ of monomial matrices in dimension $6$ has the following semidirect product structure (see \cite{Alperin}, page 48, Exercise 7)
\begin{equation}\label{semi}
M(6,\mathbb C)=\Delta(6,\mathbb C)\rtimes W,
\end{equation}
and therefore each element $m\in M(6,\mathbb C)$ can be uniquely expressed as $m=\delta w$ where $\delta\in\Delta(6,\mathbb C)$ and $w\in W$ .

Since the generators $U_\alpha(S)$ and $U_\alpha(T)$ of $G_\alpha$ obviously belong to $M(6,\mathbb C)$ the group $G_\alpha$ is a subgroup of $M(6,\mathbb C)$
\begin{equation}
 G_\alpha\leq M(6,\mathbb C).
\end{equation}
 Then we can show
\begin{lemma}\label{lem:decom}
Let $U_0$ be the representation of $\rm PSL(2,\mathbb Z)$ induced from the trivial character $\chi_0$ of $\Gamma_0(4)$. Then each element $U_\alpha(g)\in G_\alpha$ has a unique representation as
\begin{equation}
 U_\alpha(g)=D_\alpha(g)U_0(g),
\end{equation}
where $D_\alpha(g)\in \Delta(6,\mathbb C)$. 
\end{lemma}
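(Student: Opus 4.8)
The plan is to exploit the semidirect-product structure \eqref{semi} of the monomial group together with the single key observation that the \emph{positions} of the nonvanishing entries of $U_\alpha(g)$ do not depend on $\alpha$. First I would note that by \eqref{rep.-U} and \eqref{delta} the matrix $U_\alpha(g)$ has a nonzero entry in position $(i,j)$ precisely when $r_i g r_j^{-1}\in\Gamma_0(4)$, i.e.\ when $\delta_{\Gamma_0(4)}(r_i g r_j^{-1})=1$, a condition that is entirely independent of $\alpha$. Hence every $U_\alpha(g)$ shares one and the same support (zero/nonzero) pattern; in particular this pattern coincides with that of $U_0(g)$, whose nonvanishing entries are given by \eqref{indtriv} and are all equal to one.

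Next, using the decomposition $M(6,\mathbb C)=\Delta(6,\mathbb C)\rtimes W$ from \eqref{semi}, I would write the monomial matrix $U_\alpha(g)$ uniquely as $U_\alpha(g)=\delta w$ with $\delta\in\Delta(6,\mathbb C)$ and $w\in W$. Concretely, if the single nonzero entry in row $i$ sits in column $\sigma(i)$ with value $c_i=\chi_\alpha(r_i g r_{\sigma(i)}^{-1})$, then $w$ is the permutation matrix of $\sigma$ and $\delta=\mathrm{diag}(c_1,\dots,c_6)$, so that $w$ is obtained from $U_\alpha(g)$ simply by replacing every nonzero entry by $1$. By the observation of the previous paragraph this permutation matrix is exactly $U_0(g)$, whence $w=U_0(g)$. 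Setting $D_\alpha(g):=\delta=U_\alpha(g)U_0(g)^{-1}$ (note $U_0(g)^{-1}=U_0(g)^{T}$, since $U_0(g)$ is a permutation matrix) then yields the claimed factorization $U_\alpha(g)=D_\alpha(g)U_0(g)$ with $D_\alpha(g)$ diagonal, its $i$-th diagonal entry being the character value $\chi_\alpha(r_i g r_{\sigma(i)}^{-1})$.

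Finally, uniqueness follows immediately from the uniqueness of the factorization $m=\delta w$ in the semidirect product \eqref{semi}: if $U_\alpha(g)=D'U_0(g)$ for a further diagonal $D'$, then right multiplication by $U_0(g)^{-1}$ forces $D'=D_\alpha(g)$. I do not anticipate a genuine obstacle here; the entire argument is structural rather than computational. The only point demanding a little care is the identification of the permutation part of $U_\alpha(g)$ with $U_0(g)$, which rests precisely on the $\alpha$-independence of the support pattern encoded in $\delta_{\Gamma_0(4)}$, and this is exactly why the trivial induced representation $U_0$ is the natural object to factor out.
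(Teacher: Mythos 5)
Your proposal is correct and follows essentially the same route as the paper: both arguments rest on the observation that the support pattern of $U_\alpha(g)$ is $\alpha$-independent so that its permutation part in the decomposition \eqref{semi} is exactly $U_0(g)$, with the diagonal factor carrying the character values $\chi_\alpha(r_i g r(i)^{-1})$, and both derive uniqueness from the semidirect product structure. The only difference is presentational --- the paper writes down $D_\alpha(g)$ explicitly and verifies $D_\alpha(g)U_0(g)=U_\alpha(g)$ entrywise, whereas you extract $D_\alpha(g)=U_\alpha(g)U_0(g)^{-1}$ from the factorization.
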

\begin{proof}
For $g\in{\rm PSL}(2,\mathbb Z)$ denote by $D_\alpha(g)\in\Delta(6,\mathbb C)$ the diagonal matrix  
\begin{equation}\label{diag1}
\left[ D_\alpha(g)\right] _{ik}=\delta_{ik}\chi_\alpha(r_ig r(i)^{-1}),\quad1\leq i,k\leq6.
\end{equation}
Here, $r_i$ and $r(i)$ are elements of the set $R$ of representatives of $\Gamma_0(4)\backslash{\rm PSL}(2,\mathbb Z)$ with $r(i)$ uniquely determined by the condition $r_ig r(i)^{-1}\in\Gamma_0(4)$.
Then we have
\begin{equation}
 \left[ D_\alpha(g)U_0(g)\right] _{ij}=\sum_{k=1}^6 \left[ D_\alpha(g)\right] _{ik} \left[U_0(g)\right] _{kj}.
\end{equation}
Inserting \eqref{diag1} this reads
\begin{equation}
 \left[ D_\alpha(g)U_0(g)\right] _{ij}=\chi_\alpha(r_ig r(i)^{-1}) \left[U_0(g)\right] _{ij}.
\end{equation}
But according to the definition of $U_0$ in \eqref{indtriv}, we get 
\begin{equation}
 \left[ D_\alpha(g)U_0(g)\right] _{ij}=\chi_\alpha(r_ig r(i)^{-1}) \delta_{\Gamma_0(4)}(r_ig r_j^{-1}).
\end{equation}
Hence
\begin{equation}
  \left[ D_\alpha(g)U_0(g)\right] _{ij}=\begin{cases}
                 \chi_\alpha(r_ig r_j^{-1})&{\rm if}\,\,r_ig r_j^{-1}\in\Gamma_0(4),\\
		 0&{\rm if}\,\,r_ig r_j^{-1}\not\in\Gamma_0(4)
                \end{cases}
\end{equation}
or
\begin{equation}
\left[ D_\alpha(g)U_0(g)\right] _{ij}=\delta_{\Gamma_0(4)}(r_ig r_j^{-1}) \chi_\alpha(r_ig r_j^{-1}) = U_\alpha(g)_{i,j}.
\end{equation}
Since $U_0(g)$ is a permutation matrix in $W$ and $G_\alpha$ is a subgroup of $M(6,\mathbb C)$, this decomposition according to \eqref{semi} is unique.
\end{proof}

Since $\Delta(6,\mathbb C)$ is normal in $M(6,\mathbb C)$ and $G_\alpha\leq M(6,\mathbb C)$, the group 
 $A_\alpha:=G_\alpha\cap\Delta(6,\mathbb C)$ is normal in $G_\alpha$. 
By definition, $A_\alpha$ is the group of all diagonal matrices in $G_\alpha$. Hence, according to lemma \ref{lem:decom}, $A_\alpha$ is the image of the kernel $\ker U_0$  of the representation $U_0$ under the map $U_\alpha$, that is
\begin{equation}\label{norsub}
A_\alpha=\left\lbrace  U_\alpha(\gamma)\vert \gamma\in\ker U_0\right\rbrace.
\end{equation}

\begin{lemma}\label{ker3}
Let $U_0$ be the representation of $\rm PSL(2,\mathbb Z)$ induced from the trivial  character $\chi_0$ of $\Gamma_0(4)$. Then 
\begin{equation}
\ker U_0=\left\lbrace g\in {\rm PSL}(2,\mathbb Z)\vert U_0(g)=Id_{6\times6}\right\rbrace = \Gamma(4).
\end{equation}

\end{lemma}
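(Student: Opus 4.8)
The plan is to recognise $\ker U_0$ as the normal core of $\Gamma_0(4)$ in ${\rm PSL}(2,\mathbb Z)$ and then to pin this core down by a short computation with $2\times2$ matrices. Since $U_0$ is the representation induced from the trivial character, every $U_0(g)$ is a permutation matrix, and by \eqref{indtriv} its diagonal entries are $[U_0(g)]_{ii}=\delta_{\Gamma_0(4)}(r_i g r_i^{-1})$. Thus $U_0(g)=Id_{6\times6}$ is equivalent to all diagonal entries being equal to $1$, the vanishing of the off-diagonal entries being then automatic because each row of a permutation matrix contains a single nonzero entry. The first step is therefore to record
\[
\ker U_0=\{\,g\in{\rm PSL}(2,\mathbb Z)\ :\ r_i g r_i^{-1}\in\Gamma_0(4)\ \text{for all}\ r_i\in R\,\}=\bigcap_{i=1}^{6} r_i^{-1}\Gamma_0(4)\,r_i,
\]
with $R$ the set of coset representatives from \eqref{Rep.4}.

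For the inclusion $\Gamma(4)\subseteq\ker U_0$ I would invoke that $\Gamma(4)$ is normal in ${\rm PSL}(2,\mathbb Z)$ and contained in $\Gamma_0(4)$: for $g\in\Gamma(4)$ and any $r_i$ one has $r_i g r_i^{-1}\in\Gamma(4)\subseteq\Gamma_0(4)$, so every defining condition above is satisfied and $g\in\ker U_0$. This handles the easy half.

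The substantive direction is $\ker U_0\subseteq\Gamma(4)$, and the point I expect to be decisive is that already the two representatives $r_1=Id$ and $r_2=S$ suffice. Writing $g=\bigl(\begin{smallmatrix}a&b\\ c&d\end{smallmatrix}\bigr)$ with $ad-bc=1$, the condition coming from $r_1=Id$ is $g\in\Gamma_0(4)$, i.e. $c\equiv0\pmod 4$, while a direct computation gives $SgS^{-1}=\bigl(\begin{smallmatrix}d&-c\\ -b&a\end{smallmatrix}\bigr)$, so the condition from $r_2=S$ forces $b\equiv0\pmod 4$. Then $bc\equiv0\pmod{16}$ and the determinant relation yields $ad\equiv1\pmod 4$; since the units modulo $4$ are $1$ and $3\equiv-1$, this leaves only $a\equiv d\equiv1$ or $a\equiv d\equiv3\pmod4$, that is $g\equiv\pm Id\pmod4$, which by \eqref{pcs} means $g\in\Gamma(4)$. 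Combining the two inclusions gives $\ker U_0=\Gamma(4)$.

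I expect the only real subtlety to be the bookkeeping in the first step, namely correctly translating $U_0(g)=Id_{6\times6}$ into the six coset conditions and identifying their intersection with the normal core; once this is done, both inclusions are essentially forced, and remarkably only the representatives $Id$ and $S$ enter the decisive computation. As an independent check one could instead compute the order of $G_0=U_0({\rm PSL}(2,\mathbb Z))$ directly from the permutation matrices $U_0(S)$ and $U_0(T)$ and compare it with $[{\rm PSL}(2,\mathbb Z):\Gamma(4)]=24$ obtained from the index formula, but the matrix argument above avoids this computation entirely and keeps the proof self-contained.
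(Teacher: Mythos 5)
Your proof is correct and follows essentially the same route as the paper: the easy inclusion from normality of $\Gamma(4)$, and the reverse inclusion by reducing $U_0(g)=Id_{6\times6}$ to the conditions $r_igr_i^{-1}\in\Gamma_0(4)$ and testing them at specific representatives. The only divergence is that the paper additionally conjugates by $ST$ to obtain $a\equiv d\pmod 4$, whereas you note this already follows from $ad\equiv 1\pmod 4$ once $4\mid b$ and $4\mid c$ (since $1$ and $3$ are their own inverses modulo $4$) --- a valid shortcut that renders the paper's third conjugation redundant.
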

\begin{proof}
Since $\Gamma(4)\lhd{\rm PSL(2,\mathbb Z)}$, for each $\gamma\in \Gamma(4)$ and $r\in R$ there exists a $\gamma'\in \Gamma(4)$ such that $r\gamma r^{-1}=\gamma'$. Thus, according to \eqref{indtriv}  $\Gamma(4)\leq\ker U_0$. To show $\ker U_0\leq\Gamma(4)$ take $\gamma\in \ker U_0$. Then, according to \eqref{indtriv}  for each $r\in R$ one has $r\gamma r^{-1}\in\Gamma_0(4)$. Since $Id\in R$, necessarily $\gamma\in \Gamma_0(4)$. But for $\gamma\in \Gamma_0(4)$ one has $S\gamma S^{-1}\in \Gamma^0(4)$. On the other hand  $S\in R$ and therefore $S\gamma S^{-1}\in \Gamma_0(4)$. Hence $\gamma$ itself must belong to $ \Gamma_0(4)\cap\Gamma^0(4)$.  Conjugating $\gamma=\left(
\begin{array}{cc}
 a & 4b\\
 4c & d\\
\end{array}
\right)\in  \Gamma_0(4)\cap\Gamma^0(4)$ by $ST\in R$ shows that  $ST\gamma(ST)^{-1}\in\Gamma_0(4)$ iff $a\equiv d\mod4$. This and the fact that $\det \gamma=1$ yields $\gamma\in \Gamma(4)$. Hence $\ker U_0\leq\Gamma(4)$. This completes the proof.
\end{proof}
 \begin{remark}
 Obviously $\ker U_0$ is given by the maximal normal subgroup of the modular group which is contained in $\Gamma_0(4)$. Since one shows quite generally that the maximal normal subgroup $H(n)\lhd{\rm PSL(2,\mathbb Z)}$ with $H(n)\leq \Gamma_0(n)$ is given by $$H(n)=\{g\in {\rm PSL}(2,\mathbb{Z}): g = \pm \begin{pmatrix}\alpha&0\\ 0&\alpha\end{pmatrix} \mod n, \quad\alpha^2=1\mod n\}$$
and since in the case $n=4$ there is only the solution $\alpha=\pm 1$ one finds  $H(4)=\Gamma(4)$. 
 \end{remark}\begin{corollary}\label{iso2}
The normal subgroup $A_\alpha$ of $G_\alpha$ in \eqref{norsub} is given by
\begin{equation}
A_\alpha=\left\lbrace  U_\alpha(\gamma)\vert \gamma\in\Gamma(4)\right\rbrace.
\end{equation}
\end{corollary}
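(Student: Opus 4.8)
The plan is to obtain the statement as an immediate substitution, combining the already-established description \eqref{norsub} of $A_\alpha$ with the identification of $\ker U_0$ provided by Lemma \ref{ker3}.

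First I would recall how \eqref{norsub} was reached. By definition $A_\alpha = G_\alpha \cap \Delta(6,\mathbb C)$ is the subgroup of all diagonal matrices lying in $G_\alpha$. By Lemma \ref{lem:decom} every element of $G_\alpha$ has the unique monomial decomposition $U_\alpha(g) = D_\alpha(g) U_0(g)$ with diagonal part $D_\alpha(g) \in \Delta(6,\mathbb C)$ and permutation part $U_0(g) \in W$. Because this decomposition is unique in the semidirect product \eqref{semi}, the matrix $U_\alpha(g)$ is diagonal precisely when its Weyl component $U_0(g)$ equals the identity, that is, precisely when $g \in \ker U_0$. This is exactly the content of \eqref{norsub}, namely $A_\alpha = \{ U_\alpha(\gamma) \mid \gamma \in \ker U_0 \}$.

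It then remains only to insert the value of $\ker U_0$. By Lemma \ref{ker3} one has $\ker U_0 = \Gamma(4)$, and substituting this into \eqref{norsub} yields $A_\alpha = \{ U_\alpha(\gamma) \mid \gamma \in \Gamma(4) \}$, as claimed.

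There is no genuine obstacle here: all the work has already been done in Lemmas \ref{lem:decom} and \ref{ker3}, and the corollary is a pure substitution. The only point that deserves a moment's attention is that the equivalence \emph{$U_\alpha(g)$ diagonal $\Leftrightarrow g \in \ker U_0$} relies on the uniqueness half of Lemma \ref{lem:decom} --- without it one would obtain merely an inclusion rather than an equality --- but this uniqueness is itself guaranteed by the semidirect product structure $M(6,\mathbb C) = \Delta(6,\mathbb C) \rtimes W$ recorded in \eqref{semi}.
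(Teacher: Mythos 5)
Your proposal is correct and follows exactly the route the paper intends: the corollary is an immediate substitution of Lemma \ref{ker3} ($\ker U_0 = \Gamma(4)$) into the description \eqref{norsub} of $A_\alpha$, which the paper itself derives from the unique decomposition of Lemma \ref{lem:decom}. Your added remark that the equivalence between diagonality of $U_\alpha(g)$ and $g\in\ker U_0$ rests on the uniqueness half of that decomposition is a correct and worthwhile clarification, but it does not change the argument.
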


According to this corollary the generators of $A_\alpha$ can be calculated explicitly from generators of $\Gamma(4)$. A set of generators of $\Gamma(4)$ is given for instance by \cite{Kiming-et-al}
\begin{eqnarray}\label{sdf2}
g_1=T^4=\left(
\begin{array}{cc}
 1 & 4\\
 0 & 1\\
\end{array}
\right),\nonumber\\g_2=ST^{-4}S=\left(
\begin{array}{cc}
 1 & 0\\
 4 & 1\\
\end{array}
\right),\nonumber\\g_3=T^{-1}ST^4ST=\left(
\begin{array}{cc}
 -5 & -4\\
 4 & 3\\
\end{array}
\right),\nonumber\\g_4=T^{-2}ST^{-4}ST^{-2}=\left(
\begin{array}{cc}
 7 & -12\\
 -4 & 7\\
\end{array}
\right),\nonumber\\g_5=TST^{-4}ST^{-1}=\left(
\begin{array}{cc}
 -5 & 4\\
 -4 & 3\\
\end{array}
\right).
\end{eqnarray}
The corresponding generators of $A_\alpha$ are obtained by calculating their induced representations $U_\alpha (g_i)$ :
\begin{equation}\label{genA1}
A_1(\alpha):=U_\alpha(g_1)={\rm diag}( \exp(8\pi i \alpha),1,1,1,1, \exp(-8\pi i \alpha)),
\end{equation}
\begin{equation}\label{genA2}
A_2(\alpha):=U_\alpha(g_2)={\rm diag}(1,\exp(-8\pi i \alpha),1,\exp(8\pi i \alpha),1,1),
\end{equation}
\begin{equation}\label{genA3}
A_3(\alpha):=U_\alpha(g_3)={\rm diag}(1,1,\exp(8\pi i \alpha),1,\exp(-8\pi i \alpha),1)
\end{equation}
where ${\rm diag}(a_1,\ldots,a_6)$ denotes the $6$-dimensional diagonal matrix with entries $\{a_i\}$.
It turns out, that
\begin{equation}\label{gen}
U_\alpha(g_5)=U_\alpha(g_3),\quad U_\alpha(g_4)=\left[ U_\alpha(g_1)U_\alpha(g_2)\right]^{-1},
\end{equation}
and hence the group $A_\alpha$ is generated by the three elements $A_k(\alpha), \, 1\leq k \leq 3$
\begin{equation}\label{abelA}
A_\alpha=\left\langle A_1(\alpha),A_2(\alpha),A_3(\alpha)\right\rangle. 
\end{equation}

Next we consider the factor group $G_\alpha/A_\alpha$. 
\begin{lemma}\label{iso}
The factor group $G_\alpha/A_\alpha$ is isomorphic to the modulary group $G(4)={\rm PSL}(2,\mathbb Z)/\Gamma(4)$.
\end{lemma}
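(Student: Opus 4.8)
The plan is to realize $G_\alpha/A_\alpha$ as a quotient of ${\rm PSL}(2,\mathbb Z)$ via the first isomorphism theorem. Let $q\colon G_\alpha\to G_\alpha/A_\alpha$ denote the canonical projection, which is well defined since $A_\alpha\lhd G_\alpha$, and set $\pi:=q\circ U_\alpha\colon {\rm PSL}(2,\mathbb Z)\to G_\alpha/A_\alpha$. Because $U_\alpha$ maps ${\rm PSL}(2,\mathbb Z)$ onto $G_\alpha$ by definition and $q$ is surjective, the composite $\pi$ is a surjective group homomorphism. It then suffices to identify $\ker\pi$ and apply the resulting isomorphism $G_\alpha/A_\alpha\cong {\rm PSL}(2,\mathbb Z)/\ker\pi$.

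To compute the kernel, note that $\ker\pi=\{g\in{\rm PSL}(2,\mathbb Z): U_\alpha(g)\in A_\alpha\}$. Since $A_\alpha=G_\alpha\cap\Delta(6,\mathbb C)$ is exactly the set of diagonal matrices in $G_\alpha$, and $U_\alpha(g)\in G_\alpha$ always holds, one has $g\in\ker\pi$ iff $U_\alpha(g)$ is a diagonal matrix. Here I would invoke Lemma \ref{lem:decom}, which provides the unique factorization $U_\alpha(g)=D_\alpha(g)\,U_0(g)$ with $D_\alpha(g)\in\Delta(6,\mathbb C)$ and $U_0(g)$ a permutation matrix, corresponding to the semidirect decomposition $M(6,\mathbb C)=\Delta(6,\mathbb C)\rtimes W$ of \eqref{semi}. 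By uniqueness of this decomposition, $U_\alpha(g)$ is diagonal precisely when its permutation component $U_0(g)$ equals $Id_{6\times6}$.

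The final step is to interpret the condition $U_0(g)=Id_{6\times6}$ through Lemma \ref{ker3}, which identifies $\ker U_0=\Gamma(4)$. Combining the two previous steps gives $\ker\pi=\{g: U_0(g)=Id_{6\times6}\}=\ker U_0=\Gamma(4)$; the reverse containment is equally transparent, since for $\gamma\in\Gamma(4)$ one has $U_0(\gamma)=Id_{6\times6}$, whence $U_\alpha(\gamma)=D_\alpha(\gamma)$ is diagonal and lies in $A_\alpha$ by Corollary \ref{iso2}. The first isomorphism theorem then yields $G_\alpha/A_\alpha\cong {\rm PSL}(2,\mathbb Z)/\Gamma(4)=G(4)$, as claimed.

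I do not anticipate a genuine obstacle here: the argument is a clean application of the first isomorphism theorem together with Lemma \ref{ker3}. The only point requiring care is the middle step, where one must use the uniqueness of the monomial factorization $M(6,\mathbb C)=\Delta(6,\mathbb C)\rtimes W$ to conclude that $U_\alpha(g)$ being diagonal forces the permutation part $U_0(g)$ to be trivial; Lemma \ref{lem:decom} is precisely what guarantees this, by pinning down $U_0(g)$ as the unique Weyl component of $U_\alpha(g)$.
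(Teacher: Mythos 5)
Your proof is correct and follows essentially the same route as the paper: both arguments reduce to the identification $A_\alpha=U_\alpha(\Gamma(4))$ together with the isomorphism theorems (the paper phrases it via $G_\alpha\cong{\rm PSL}(2,\mathbb Z)/\ker U_\alpha$, $A_\alpha\cong\Gamma(4)/\ker U_\alpha$ and the third isomorphism theorem, while you apply the first isomorphism theorem once to the composite $q\circ U_\alpha$). Your version has the minor virtue of making explicit, via the uniqueness of the decomposition $U_\alpha(g)=D_\alpha(g)U_0(g)$, why the full preimage of $A_\alpha$ is exactly $\Gamma(4)$ (equivalently $\ker U_\alpha\leq\Gamma(4)$), a point the paper leaves implicit.
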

\begin{proof}
Since $G_\alpha = U_\alpha (PSL(2,\mathbb{Z}))$ and $A_\alpha= U_\alpha(\Gamma(4))$ it follows that $G_\alpha\cong {\rm PSL}(2,\mathbb Z)/ \ker U_\alpha$ respectively $A_\alpha\cong \Gamma(4)/ \ker U_\alpha$. Hence 
\begin{equation}
G_\alpha/A_\alpha\cong\rm{PSL}(2,\mathbb Z)/\Gamma(4)=G(4).
\end{equation}
\end{proof}

\section{Noncongruence character groups and the induced representation $U_\alpha$}\label{ch:congruence}
According to the definition of $U_\alpha$ in \eqref{rep.-U}  an element $g\in \rm PSL(2,\mathbb Z)$ belongs to $\ker U_\alpha$ iff for all representatives $r\in R$  in \eqref{Rep.4} one has $\delta_{\Gamma_0(4)} (rgr^{-1})\chi_\alpha(rgr^{-1})=1$ and hence iff  $rgr^{-1}\in\ker\chi_\alpha$ for all $r\in R$, that is 
\begin{equation}\label{jk09}
\ker U_\alpha=\left\lbrace g\in {\rm PSL} (2,\mathbb Z) \,\vert\, rgr^{-1}\in\ker\chi_\alpha,\forall r\in R\right\rbrace.
\end{equation}
From this one concludes
\begin{lemma}\label{theorem:cong}
$\ker U_\alpha$ is congruence if and only if $\ker \chi_\alpha$ is congruence. 
\end{lemma}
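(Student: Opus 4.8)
The plan is to exploit the key characterization already derived in equation \eqref{jk09}, namely that
\begin{equation*}
\ker U_\alpha=\left\lbrace g\in {\rm PSL} (2,\mathbb Z) \,\vert\, rgr^{-1}\in\ker\chi_\alpha,\ \forall r\in R\right\rbrace = \bigcap_{r\in R} r^{-1}\,(\ker\chi_\alpha)\,r .
\end{equation*}
Thus $\ker U_\alpha$ is a finite intersection of conjugates of $\ker\chi_\alpha$ by the six coset representatives $r\in R$. The two directions of the biconditional will be handled separately, and the main tool throughout is that the congruence groups $\Gamma(N)$ are normal in ${\rm PSL}(2,\mathbb Z)$, so that conjugation permutes them trivially and preserves the congruence property.

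First I would prove that if $\ker\chi_\alpha$ is congruence, then so is $\ker U_\alpha$. Assume $\Gamma(N)\leq\ker\chi_\alpha$ for some $N$. For each fixed $r\in R$, conjugating gives $r^{-1}\Gamma(N)r \leq r^{-1}(\ker\chi_\alpha)r$; but $\Gamma(N)$ is normal in ${\rm PSL}(2,\mathbb Z)$, so $r^{-1}\Gamma(N)r=\Gamma(N)$ and hence $\Gamma(N)\leq r^{-1}(\ker\chi_\alpha)r$ for every $r\in R$. Intersecting over the finitely many $r\in R$ yields $\Gamma(N)\leq\bigcap_{r\in R}r^{-1}(\ker\chi_\alpha)r=\ker U_\alpha$, so $\ker U_\alpha$ is congruence.

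For the converse, assume $\ker U_\alpha$ is congruence, so $\Gamma(N)\leq\ker U_\alpha$ for some $N$. Since the intersection defining $\ker U_\alpha$ contains the term with $r=Id$, we have $\ker U_\alpha\leq\ker\chi_\alpha$ directly, and therefore $\Gamma(N)\leq\ker U_\alpha\leq\ker\chi_\alpha$, so $\ker\chi_\alpha$ is congruence. This direction is in fact the easy one precisely because $Id\in R$ makes $\ker\chi_\alpha$ one of the groups being intersected.

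The only point requiring care --- and the step I would flag as the conceptual heart of the argument --- is the invocation of normality of $\Gamma(N)$ in the forward direction; without it, $r^{-1}\Gamma(N)r$ need not again contain (or equal) $\Gamma(N)$, and the intersection over $R$ could a priori fail to contain any single congruence group. Since $\Gamma(N)\lhd{\rm PSL}(2,\mathbb Z)$ by \eqref{pcs}, each conjugate coincides with $\Gamma(N)$ itself, and because $R$ is \emph{finite} the intersection stays a finite-index (indeed congruence) group. I would note that this is exactly why the definition of congruence via containment of some principal congruence subgroup, rather than via finite index alone, makes the statement clean: normality and finiteness of $R$ together guarantee the single bound $\Gamma(N)$ survives the intersection.
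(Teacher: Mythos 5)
Your proposal is correct and follows essentially the same route as the paper's own proof: both directions rest on the identity $\ker U_\alpha=\bigcap_{r\in R}r^{-1}(\ker\chi_\alpha)r$ from \eqref{jk09}, with the easy direction coming from $Id\in R$ and the converse from the normality of $\Gamma(N)$ in ${\rm PSL}(2,\mathbb Z)$. The only difference is cosmetic (you treat the forward direction first and add a remark on why normality is the essential ingredient), so nothing further is needed.
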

\begin{proof}
Since $Id\in R$, according to \eqref{jk09} $\ker U_\alpha$ is a subgroup of $\ker \chi_\alpha$
\begin{equation}\label{nm67}
 \ker U_\alpha\leq\ker \chi_\alpha.
\end{equation}
Thus, if $\ker U_\alpha$ is a congruence subgroup then also $\ker\chi_\alpha$ is a congruence subgroup.
To prove the converse, consider the kernel $\ker U_\alpha$ in \eqref{jk09}, which is given by the following intersection of sets
\begin{equation}\label{ker2}
\ker U_\alpha= r_1^{-1} \ker \chi_\alpha r_1\cap r_2^{-1} \ker \chi_\alpha r_2\cap\ldots \cap r_6^{-1} \ker \chi_\alpha r_6,\quad r_i\in R.
\end{equation}
If $\ker \chi_\alpha$ is congruence, then $\Gamma(n)\leq\ker\chi_\alpha$ for some $n\in \mathbb N$. But $\Gamma(n)$ is normal in ${\rm PSL
}(2,\mathbb Z)$ and therefore $\Gamma(n)\leq r^{-1}\ker\chi_\alpha \, r$ for all $r\in R$. Therefore, according to \eqref{ker2}, $\Gamma(n)\leq\ker U_\alpha$. Hence $\ker U_\alpha$ is also a congruence subgroup.
\end{proof}

As a corollary of theorem \ref{theorem:cong} we then have
\begin{corollary}
The representation $U_\alpha$ is congruence iff Selberg's character $\chi_\alpha$ is congruence. 
\end{corollary}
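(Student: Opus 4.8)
The plan is to derive this statement directly from the definition of a congruence representation together with Lemma~\ref{theorem:cong}. By that definition, the representation $U_\alpha$ is congruent precisely when its kernel $\ker U_\alpha$ is a congruence subgroup of ${\rm PSL}(2,\mathbb{Z})$, and likewise Selberg's character $\chi_\alpha$ is congruent precisely when $\ker \chi_\alpha$ is a congruence subgroup. Thus both notions of congruence are, by definition, merely reformulations of a property of the respective kernels, and the corollary asks only that these two kernel properties be equivalent.

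This equivalence is exactly the content of Lemma~\ref{theorem:cong}, which asserts that $\ker U_\alpha$ is a congruence subgroup if and only if $\ker\chi_\alpha$ is. I would therefore simply chain the three biconditionals: $U_\alpha$ congruent iff $\ker U_\alpha$ is a congruence subgroup, $\ker U_\alpha$ a congruence subgroup iff $\ker\chi_\alpha$ is one, and $\ker\chi_\alpha$ a congruence subgroup iff $\chi_\alpha$ is congruent. The composition of these equivalences immediately yields the assertion.

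There is no genuine obstacle here, since all of the substantive work has already been carried out in Lemma~\ref{theorem:cong}. If I wished to make the argument self-contained I would recall the key mechanism of that lemma, namely the expression \eqref{ker2} of $\ker U_\alpha$ as the finite intersection $\bigcap_{r\in R} r^{-1}(\ker\chi_\alpha)\,r$ over the coset representatives $R$. One direction uses $Id\in R$ to obtain $\ker U_\alpha\le\ker\chi_\alpha$, so that congruence of the smaller group $\ker U_\alpha$ forces congruence of $\ker\chi_\alpha$. The converse is the only place where anything beyond set inclusions is needed: if $\Gamma(n)\le\ker\chi_\alpha$ for some $n$, then the normality of $\Gamma(n)$ in ${\rm PSL}(2,\mathbb{Z})$ gives $\Gamma(n)\le r^{-1}(\ker\chi_\alpha)\,r$ for every $r\in R$, so $\Gamma(n)$ lies in the intersection \eqref{ker2} and $\ker U_\alpha$ is a congruence subgroup. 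Once the definition of congruence is unfolded, the corollary follows in a single line.
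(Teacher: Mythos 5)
Your proof is correct and follows exactly the paper's route: the corollary is an immediate consequence of Lemma~\ref{theorem:cong} once the definition of a congruence character/representation is unfolded, and your recap of that lemma's mechanism (the intersection $\bigcap_{r\in R} r^{-1}(\ker\chi_\alpha)r$, the inclusion via $Id\in R$, and the normality of $\Gamma(n)$) matches the paper's own argument. Nothing further is needed.
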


Next we are going to determine several properties of $\ker U_\alpha$ which will lead us for rational non-congruence values of $\alpha$ to an  infinite family of noncongruence character groups  with arbitrary large genus, rather different from the ones determined by the character $\chi_\alpha$. At the same time this provides us with an independent determination of the congruence values $\alpha$ of Newman et al. for the character $\chi_\alpha$ respectively the representation $U_\alpha$ and the corresponding congruence groups.\newline
For this denote by $N=N(\alpha)$ the order of the generators of the group $A_\alpha$ defined in \eqref{abelA}.
According to  Corollary \ref{iso2}
\begin{equation}\label{rty}
\Gamma(4)/\ker U_\alpha\cong A_\alpha,
\end{equation}
and hence the index $\mu(\alpha)=\left[{\rm PSL(2,\mathbb Z)}:\ker U_\alpha\right]$ of $\ker U_\alpha$ in ${\rm PSL(2,\mathbb Z)}$ is equal to the number of elements of $A_\alpha$ times the index of $\Gamma(4)$ in ${\rm PSL}(2,\mathbb Z)$ .  Thus we have
\begin{equation}\label{indexalpha}
 \mu(\alpha)=24 N^3=24N(\alpha)^3.
\end{equation}
For irrational $\alpha$ the subgroup $\ker U_\alpha$ is therefore of infinite index in ${\rm PSL}(2,\mathbb Z)$ and cannot be a congruence group. 
In the following let $\alpha$ be rational with $N(\alpha)=N$ for some $N\in \mathbb N$.\newline
Using the Gauss-Bonnet formula we can determine the number of generators of $\ker U_\alpha$.
Since $\ker U_\alpha\leq \Gamma(4)$ it has no elliptic elements. The Gauss-Bonnet formula for a group $\Gamma$ without elliptic elements then reads  \cite{Alexei} (page 15),
\begin{equation}
\vert F\vert=2\pi(2g-2+p), 
\end{equation}
where $\vert F\vert$ is the area of the fundamental domain of $\Gamma$, $g$ is its genus and $p$ the number of its cusps.
It is also known that the number of generators of $\Gamma$ is given by $2g+p$ \cite{Alexei} (page 14). But for the group $\ker U_\alpha$ one has also
\begin{equation}
\vert F\vert=\mu(\alpha)\dfrac{\pi}{3},
\end{equation}
where $\pi/3$ is the area of the fundamental domain of ${\rm PSL}(2,\mathbb Z)$ and $\mu(\alpha)$ is the index of $\ker U_\alpha$ in ${\rm PSL(2,\mathbb Z)}$ determined in \eqref{indexalpha}. Hence the number of generators of $\ker U_\alpha$ is given by 
\begin{equation}\label{ggg}
2g+p=4N^3+2.
\end{equation}
The number $\mathcal{N}(\alpha)$ of free generators on the other hand is given by \cite{Alexei} (page 14)
\begin{equation}
\mathcal N(\alpha)=2g+p-1=4N^3+1.
\end{equation}

Let us next recall the concept of the width of a cusp  respectively Wohlfahrt's generalized notion of the level of any subgroup $\Gamma$ of the modular group \cite{Wohlfahrt}: 
\begin{definition}
For $x\in\mathbb Q\cup\left\lbrace \infty\right\rbrace$ a cusp of the group  $\Gamma\leq{\rm PSL}(2,\mathbb Z)$ and $\sigma\in {\rm PSL}(2,\mathbb Z)$ with $\sigma \infty=x$, let $P\in\Gamma$ be a primitive parabolic element with $Px=x$. 
If \begin{equation}
 \sigma P\sigma^{-1}=\left(
\begin{array}{cc}
 1 & m\\
 0 & 1\\
\end{array}
\right)\in  {\rm PSL}(2,\mathbb Z).
\end{equation}
then $\vert m\vert$ is called the width of the cusp $x$ of $\Gamma$.
\end{definition}

\begin{definition}
Let $\Gamma\leq{\rm PSL}(2,\mathbb Z)$ and $W(\Gamma)\leq\mathbb N$ be the set of widths of the cusps of $\Gamma$. If $W(\Gamma)$ is nonempty and bounded in $\mathbb N$, the Wohlfahrt level $n(\Gamma)$ of $\Gamma$ is defined to be the least common multiple of the elements of $W(\Gamma)$. Otherwise the level is defined to be zero.
\end{definition}
For congruence subgroups $\Gamma$ F. Klein on the other hand defined the level as follows \cite{Wohlfahrt} 
\begin{definition}
The level of a congruence subgroup is defined to be the least integer $n$ such that $\Gamma(n)\subset\Gamma$.
\end{definition}
It is known that for congruence subgroups Wohlfahrt's and F. Klein's definitions of the level coincide \cite{Wohlfahrt}, that is
If $\Gamma$ is a congruence subgroup of Wohlfahrt level $n$ then $\Gamma(n)\leq\Gamma$. 
Next we determine the Wohlfahrt level of the group $\ker U_\alpha$. Since $\ker U_\alpha$ is normal in $ {\rm PSL}(2,\mathbb Z)$ all cusps of $\ker U_\alpha$ have the same width \cite{Alexei} (page 160). Thus it is enough to find the width just for one cusp.  
According to \eqref{genA1}, for $\alpha$ with $N(\alpha)=N$, we have $U_\alpha(g_1)^N=Id_{6\times6}$ for the generator
\begin{equation}
g_1=\left(
\begin{array}{cc}
 1 & 4\\
 0 & 1\\
\end{array}
\right)
\end{equation}
of $\Gamma(4)$ in \eqref{sdf2}. Hence
\begin{equation}\label{gg23}
g_1^N=\left(
\begin{array}{cc}
 1 & 4N\\
 0 & 1\\
\end{array}
\right)
\end{equation}
belongs to $\ker U_\alpha$ and is obviously primitive. Thus Wohlfahrt's level $n(\alpha)$ of $\ker U_\alpha$ is given for $\alpha$ with $N(\alpha)=N$ by 
\begin{equation}\label{fg56}
 n(\alpha)=4N.
\end{equation}

Next we use a formula due to M. Newman \cite{Newman} to determine the genus of $\ker U_\alpha$. 
Namely, let $\Gamma$ be a normal subgroup of ${\rm PSL}(2,\mathbb Z)$ with index $\mu$, genus $g$, number of its cusps $p$ and Wohlfahrt level $n$. If  $t:=\dfrac{\mu}{n}$,  
then   the following identity holds \cite{Newman}, \cite{Alexei} (page 160)
\begin{equation}\label{newmanform.}
 g=1+\dfrac{\mu}{12}-\dfrac{t}{2}.
\end{equation}
For the group $\ker U_\alpha$ we get with  \eqref{indexalpha} and \eqref{fg56}  $t=6N^2$. 
Inserting this and \eqref{indexalpha} into \eqref{newmanform.} we obtain for the genus $g(\alpha)$ of $\ker U_\alpha$
\begin{equation}
 g(\alpha)=1+2N^3-3N^2.
\end{equation}

respectively with \eqref{ggg} for the number $p(\alpha)$ of cusps of $\ker U_\alpha$ 
\begin{equation}
p(\alpha)=6N^2.
\end{equation}

Summarizing we therefore have proved for the group $\ker U_\alpha$ 
\begin{theorem}\label{theorem:inf}
If $N(\alpha)=N\in \mathbb N$ denotes the order of the generators of the abelian group $A_\alpha= \Gamma(4)/\ker U_\alpha$ with $|A_\alpha|= N^3$, let $\mu(\alpha)$ be the index of the group $\ker U_\alpha$ in ${\rm PSL}(2,\mathbb Z)$, $g(\alpha)$ its genus, $p(\alpha)$ the number of its cusps, $n(\alpha)$ its Wohlfahrt level and $\mathcal N(\alpha)$ the number of its free generators.
Then 
\begin{itemize}
 \item $\mu(\alpha)=24N^3$
 \item $g(\alpha)=1+2N^3-3N^2$
 \item $p(\alpha)=6N^2$
 \item $n(\alpha)=4N$
 \item $\mathcal N(\alpha)=4 N^3+1$
\end{itemize}    
\end{theorem}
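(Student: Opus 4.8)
The plan is to collect the five invariants of $\ker U_\alpha$ by reading off consequences of the already-established isomorphism $\Gamma(4)/\ker U_\alpha \cong A_\alpha$ (equation \eqref{rty}) together with the explicit diagonal generators $A_1(\alpha), A_2(\alpha), A_3(\alpha)$ of $A_\alpha$. The single substantive input is the order $|A_\alpha|$; once that is pinned down, every other quantity follows from standard index counting, Gauss--Bonnet, and Newman's formula \eqref{newmanform.}.

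First I would determine $|A_\alpha|$. Each generator $A_k(\alpha)$ is a diagonal matrix whose only nontrivial entries are $e^{\pm 8\pi i \alpha}$, and by inspection of \eqref{genA1}--\eqref{genA3} the three generators are supported on the disjoint index pairs $\{1,6\}$, $\{2,4\}$ and $\{3,5\}$ respectively. Hence a product $A_1(\alpha)^a A_2(\alpha)^b A_3(\alpha)^c$ equals $Id_{6\times 6}$ iff $e^{8\pi i \alpha a} = e^{8\pi i \alpha b} = e^{8\pi i \alpha c}=1$, that is, iff $a \equiv b \equiv c \equiv 0 \pmod{N}$, where $N=N(\alpha)$ is the common order of the $A_k(\alpha)$. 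Thus $A_\alpha \cong (\mathbb{Z}/N\mathbb{Z})^3$ is freely abelian of rank three and $|A_\alpha| = N^3$. This disjoint-support observation, which forces the three generators to be multiplicatively independent, is the crux of the argument and the one place where the explicit form of the matrices is really needed; everything after it is deterministic.

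With $|A_\alpha| = N^3$ in hand I would chain the remaining items. The index is $\mu(\alpha) = [\mathrm{PSL}(2,\mathbb{Z}):\Gamma(4)]\cdot|A_\alpha| = 24 N^3$, recovering \eqref{indexalpha}. Since $\ker U_\alpha$ is normal in $\mathrm{PSL}(2,\mathbb{Z})$ all its cusps share the same width, and the primitive parabolic $g_1^N = \bigl(\begin{smallmatrix}1 & 4N\\0&1\end{smallmatrix}\bigr)\in\ker U_\alpha$ fixing $\infty$ gives Wohlfahrt level $n(\alpha) = 4N$. Because $\ker U_\alpha \leq \Gamma(4)$ is free of elliptic elements, Gauss--Bonnet reads $\mu(\alpha)\pi/3 = 2\pi(2g-2+p)$, whence $2g+p = 4N^3 + 2$ and the number of free generators is $\mathcal{N}(\alpha) = 2g+p-1 = 4N^3+1$.

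Finally, to separate $g$ from $p$ I would feed $\mu = 24N^3$ and $t = \mu/n = 6N^2$ into Newman's formula $g = 1 + \mu/12 - t/2$, obtaining $g(\alpha) = 1 + 2N^3 - 3N^2$; subtracting $2g$ from $2g+p = 4N^3+2$ then yields $p(\alpha) = 6N^2$. I anticipate no genuine obstacle beyond the independence of the three generators established above; the only subsidiary point to verify is that $g_1^N$ is truly primitive in $\ker U_\alpha$, i.e. that no smaller power of $g_1$ already lies in the kernel, which again follows from $A_1(\alpha)$ having order exactly $N$.
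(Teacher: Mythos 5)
Your proof follows essentially the same route as the paper: index from $[\mathrm{PSL}(2,\mathbb{Z}):\Gamma(4)]\cdot|A_\alpha|$, Gauss--Bonnet for $2g+p$ and the free-generator count, the primitive parabolic $g_1^N$ for the Wohlfahrt level, and Newman's formula to separate $g$ from $p$. The one place you go beyond the paper is in actually justifying $|A_\alpha|=N^3$ via the disjoint supports of the three diagonal generators --- the paper simply asserts this --- and that addition is correct and worth having.
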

Since  the Wohlfahrt level of $\ker U_\alpha$ is given by $n=4N(\alpha)$, one finds in case $\ker U_\alpha$  is a congruence group 
\begin{equation}\label{lll}
\Gamma(4N)\leq\ker U_\alpha. 
\end{equation}
From this one determines easily those values of $\alpha$ for which $\ker U_\alpha$ is indeed a congruence subgroup. Since the index of $\Gamma(4N)$ in ${\rm PSL}(2,\mathbb Z)$, given by
\begin{equation}
\left[{\rm PSL}(2,\mathbb Z):\Gamma(4N)\right]=\dfrac{1}{2}(4N)^3\prod_{p\mid 4N}(1-\dfrac{1}{p^2}),
\end{equation}
 must then be larger or equal to the index $\mu(\alpha)$ of $\ker U_\alpha$ in ${\rm PSL}(2,\mathbb Z)$,  one finds
\begin{equation}
\dfrac{1}{2}(4N)^3\prod_{p\mid 4N}(1-\dfrac{1}{p^2})\geq24N^3
\end{equation}
or
\begin{equation}
\dfrac{4}{3}\prod_{p\mid 4N}(1-\dfrac{1}{p^2})\geq1.
\end{equation}
Obviously this inequality holds if and only if $N=2^k, \, 0\leq k<\infty$. 
\begin{lemma}\label{congruence} If $N(\alpha)=2^k$ and $\ker U_\alpha$ is a congruence group then $\ker U_\alpha = \Gamma(2^{k+2})$ and hence $A_\alpha\cong\Gamma(4)/\Gamma(2^{k+2})$.
\end{lemma}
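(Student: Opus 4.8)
The plan is to show that when $N(\alpha)=2^k$ and $\ker U_\alpha$ is assumed congruent, the containment $\Gamma(4N)\leq\ker U_\alpha$ from \eqref{lll} already pins down $\ker U_\alpha$ exactly, forcing equality with $\Gamma(2^{k+2})$. First I would substitute $N=2^k$ into the Wohlfahrt level formula \eqref{fg56}, giving $n(\alpha)=4N=2^{k+2}$. Since $\ker U_\alpha$ is assumed congruent, the coincidence of Klein's and Wohlfahrt's level definitions (recalled just before Theorem \ref{theorem:inf}) yields immediately
\begin{equation}
\Gamma(2^{k+2})\leq\ker U_\alpha.
\end{equation}
So one inclusion is essentially free. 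The whole content of the lemma is the reverse inclusion, or equivalently the assertion that $\Gamma(2^{k+2})$ is not a proper subgroup of $\ker U_\alpha$.

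For the reverse inclusion I would argue by comparing indices, which is the cleanest route since both groups are normal in ${\rm PSL}(2,\mathbb Z)$. By Theorem \ref{theorem:inf} the index of $\ker U_\alpha$ is $\mu(\alpha)=24N^3=24\cdot 2^{3k}=3\cdot 2^{3k+3}$. On the other hand the index of $\Gamma(2^{k+2})$ is computed from the stated index formula for $\Gamma(n)$: with $n=2^{k+2}$ the only prime dividing $n$ is $2$, so
\begin{equation}
\left[{\rm PSL}(2,\mathbb Z):\Gamma(2^{k+2})\right]=\tfrac{1}{2}(2^{k+2})^3\Bigl(1-\tfrac{1}{4}\Bigr)=\tfrac{1}{2}\cdot 2^{3k+6}\cdot\tfrac{3}{4}=3\cdot 2^{3k+3}.
\end{equation}
These two indices are \emph{equal}. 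Combined with the containment $\Gamma(2^{k+2})\leq\ker U_\alpha$ already established, equal finite index forces $\Gamma(2^{k+2})=\ker U_\alpha$. (Here one uses $k\geq 0$, so that $2^{k+2}\geq 4>2$ and the generic branch of the index formula applies.) The final claim $A_\alpha\cong\Gamma(4)/\Gamma(2^{k+2})$ is then immediate by substituting $\ker U_\alpha=\Gamma(2^{k+2})$ into the isomorphism \eqref{rty}, namely $\Gamma(4)/\ker U_\alpha\cong A_\alpha$.

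I do not expect a genuine obstacle here; the lemma is really an index bookkeeping statement resting on the earlier structural results. The one point deserving care is the logical status of the hypothesis: the equality of indices above is exactly the boundary case of the inequality $\tfrac{4}{3}\prod_{p\mid 4N}(1-\tfrac{1}{p^2})\geq 1$ derived just before the lemma, which is saturated precisely when the only prime is $2$. So the identity of indices is not a coincidence but the content of the congruence criterion $N=2^k$; I would phrase the proof so as to make clear that the same computation that selects the congruent values also identifies the congruence group. A secondary subtlety is that equal index gives equality only because one containment is known; I would state explicitly that $\Gamma(2^{k+2})\leq\ker U_\alpha$ together with $[{\rm PSL}(2,\mathbb Z):\Gamma(2^{k+2})]=[{\rm PSL}(2,\mathbb Z):\ker U_\alpha]$ forces the two groups to coincide, rather than merely having the same size.
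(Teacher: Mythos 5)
Your proof is correct and follows essentially the same route as the paper: both obtain $\Gamma(2^{k+2})=\Gamma(4N)\leq\ker U_\alpha$ from the Wohlfahrt level together with the congruence assumption, and then force equality by an index count. The only cosmetic difference is that you compare indices in ${\rm PSL}(2,\mathbb Z)$ (both equal to $3\cdot 2^{3k+3}$), whereas the paper compares $[\Gamma(4):\Gamma(2^{k+2})]=2^{3k}$ with $|A_\alpha|=2^{3k}$ inside $\Gamma(4)$ — the same computation up to the factor $[{\rm PSL}(2,\mathbb Z):\Gamma(4)]=24$.
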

\begin{proof}
For $\alpha$ with $N(\alpha)=2^k$ the group $A_\alpha$ has order $2^{3k}$. If  $\ker U_\alpha$ is a congruence subgroup then $\Gamma(2^{k+2})\leq \ker U_\alpha$. On the other hand one finds for the index $\left[\Gamma(4):
\Gamma(2^{k+2})\right]$ by a simple calculation $\left[\Gamma(4):\Gamma(2^{k+2})\right]= 2^{3k}$. But $\Gamma(4)/\Gamma(2^{k+2})\geq \Gamma(4)/\ker U_\alpha \cong A_\alpha$ and hence $2^{3k}=| A_\alpha|\leq |\Gamma(4)/\Gamma(2^{k+2})|=2^{3k}$. Therefore 
$\ker U_\alpha = \Gamma(2^{k+2})$. 
\end{proof}
Next we show, that only for $k=0,1,2$ the principal congruence subgroup $\Gamma(2^{k+2})$ , that means, only $\Gamma(4), \Gamma(8)$ and $\Gamma(16)$ can coincide with the group $\ker U_\alpha$. This follows immediately from the following lemma
\begin{lemma}
The group $\Gamma(4)/\Gamma(2^{k+2})$ is abelian iff k=0,1,2.
\end{lemma}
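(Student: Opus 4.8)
The plan is to use the elementary fact that for a normal subgroup $N\lhd G$ the quotient $G/N$ is abelian precisely when the commutator subgroup satisfies $[G,G]\leq N$. Applying this with $G=\Gamma(4)$ and $N=\Gamma(2^{k+2})$ (the latter being normal even in ${\rm PSL}(2,\mathbb Z)$), the claim reduces to deciding for which $k$ one has $[\Gamma(4),\Gamma(4)]\leq\Gamma(2^{k+2})$. Among these $2$-power levels $\Gamma(m)\geq\Gamma(m')$ exactly when $m\mid m'$, so everything comes down to localizing the commutator subgroup between two consecutive principal congruence groups. I would work with $SL(2,\mathbb Z)$ representatives, which is harmless since $-Id\notin\Gamma(n)$ for $n\geq3$, so the projective and linear principal congruence groups of all these levels coincide.

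The first main step (the upper bound) is to show $[\Gamma(4),\Gamma(4)]\leq\Gamma(16)$. Writing two arbitrary elements of $\Gamma(4)$ as $A=Id+4X$ and $B=Id+4Y$ with $X,Y\in M_2(\mathbb Z)$, one has $A^{-1}\equiv Id-4X$ and $B^{-1}\equiv Id-4Y\pmod{16}$, so a direct multiplication gives $ABA^{-1}B^{-1}\equiv(Id+4X+4Y)(Id-4X-4Y)\equiv Id-16(X+Y)^2\equiv Id\pmod{16}$. Hence every commutator of elements of $\Gamma(4)$ lies in $\Gamma(16)$, and since $\Gamma(16)$ is a group it contains the whole commutator subgroup. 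A finer bookkeeping modulo $64$ even shows $ABA^{-1}B^{-1}\equiv Id+16(XY-YX)\pmod{64}$, which foreshadows why the containment in $\Gamma(16)$ is sharp.

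The second main step (the lower bound) is to exhibit a single commutator outside $\Gamma(32)$. I would take the explicit generators $g_1=T^4$ and $g_2=ST^{-4}S$ of $\Gamma(4)$ from \eqref{sdf2}; a short computation yields
\begin{equation}
[g_1,g_2]=g_1g_2g_1^{-1}g_2^{-1}=\left(\begin{array}{cc} 273 & -64 \\ 64 & -15 \end{array}\right),
\end{equation}
which has determinant $1$ and whose $(1,1)$ entry satisfies $273\equiv1\pmod{16}$ but $273\equiv17\pmod{32}$. Thus $[g_1,g_2]\in\Gamma(16)\setminus\Gamma(32)$, and in particular $[\Gamma(4),\Gamma(4)]\not\leq\Gamma(32)$.

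Combining the two steps finishes the proof. For $k=0,1,2$ the group $\Gamma(2^{k+2})$ is one of $\Gamma(4),\Gamma(8),\Gamma(16)$, and the chain $[\Gamma(4),\Gamma(4)]\leq\Gamma(16)\leq\Gamma(8)\leq\Gamma(4)$ shows $[\Gamma(4),\Gamma(4)]\leq\Gamma(2^{k+2})$ in each case, so the quotient is abelian. For $k\geq3$ one has $\Gamma(2^{k+2})\leq\Gamma(32)$, and the witness $[g_1,g_2]$ lies in $[\Gamma(4),\Gamma(4)]$ but not in $\Gamma(32)\supseteq\Gamma(2^{k+2})$, so $[\Gamma(4),\Gamma(4)]\not\leq\Gamma(2^{k+2})$ and the quotient is nonabelian. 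The main obstacle is the sharp localization of the commutator subgroup: proving the clean containment $[\Gamma(4),\Gamma(4)]\leq\Gamma(16)$ for \emph{all} pairs (not merely the chosen generators) while simultaneously producing an honest commutator that escapes $\Gamma(32)$. The modular arithmetic with the $Id+4X$ matrices and the determinant check on the explicit $2\times2$ product are the places requiring care.
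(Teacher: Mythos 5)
Your proof is correct and takes essentially the same route as the paper: the containment $[\Gamma(4),\Gamma(4)]\leq\Gamma(16)$ via a mod-$16$ computation with elements written as $Id+4X$ (the paper writes the entries out as $1+4a_i$ etc.\ and uses $4\mid(a_i+d_i)$ where you use the determinant condition implicitly in $A^{-1}\equiv Id-4X$), and a non-commuting witness built from $T^4$ and $ST^{\mp4}S$ whose commutator lies in $\Gamma(16)\setminus\Gamma(32)$. Your matrix $\left(\begin{smallmatrix}273&-64\\64&-15\end{smallmatrix}\right)$ checks out (determinant $1$, entries $\equiv Id$ mod $16$ but $273\equiv17\not\equiv\pm1$ mod $32$), so the argument is complete.
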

Since we did not find this result, which is presumably well known, in the literature, we give a simple proof.
\begin{proof}
For $h_i=\begin{pmatrix} 1+4 a_i& 4 b_i\\ 4 c_i&1+4 d_i\end{pmatrix}\in \Gamma(4),\, i=1,2,$ one finds for $h_{i,j}:= h_i h_j, \, i, j =1,2$: $$ h_{1,2}= h_{2,1}= \begin{pmatrix} 1+4 (a_1+a_2)& 4 (b_1+b_2)\\ 4 (c_1+c_2)&1+4 (d_1+d_2) \end{pmatrix}\mod 16$$ respectively $$h_{1,2}^{-1} h_{2,1}=\begin{pmatrix} 1+4 (a_1+a_2+d_1+d_2)& 0\\ 0&1+4 (a_1+a_2+d_1+d_2) \end{pmatrix}\mod 16.$$ But $4| (a_i+d_i), i=1,2$, and therefore $h_{1,2}= h_{2,1} \mod \Gamma (16)$. Hence $\Gamma(4)/\Gamma(2^{k+2})$ is abelian for $k=0,1,2$. To show that this group is not abelian for $k\geq 3$ take the two elements $h_1=\begin{pmatrix} 1& 4\\ 0&1\end{pmatrix}$ respectively $h_2=\begin{pmatrix} -1& 0\\ 4 &-1 \end{pmatrix}\in \Gamma(4)$. Then one finds $h_{1,2}^{-1} h_{2,1}=\begin{pmatrix} 17& 64 \\ 64&241\end{pmatrix}$ which does not belong to $\Gamma(2^{k+2)})$ for $k\geq 3$.
\end{proof}
Next we show, that $\Gamma(16)$ cannot be a subgroup of $\ker U_\alpha$.  Assume this is the case. By lemma \ref{congruence}  $\ker U_\alpha = \Gamma(16) $.  But according to (\ref{gen}) $U_\alpha(g_5)=U_\alpha(g_3)$ for the generators $g_3$ and $g_5$ of $\Gamma(4)$ in (\ref{sdf2}) and hence $g_3^{-1}g_5 \in \ker U_\alpha$. But 
$g_3^{-1} g_5=\begin{pmatrix} 1& 8\\ 8&1\end{pmatrix} \mod 16$ which does not belong to $\Gamma(16)$. Hence $\ker U_\alpha > \Gamma(16)$, which is a contradiction.
This proves
\begin{corollary}
The group $\ker U_\alpha$ can be a congruence group only for $N(\alpha)= 1 ,2 .$
\end{corollary}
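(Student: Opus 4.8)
The plan is to assemble the lemmas just proved into a short chain of implications, with the one genuinely new ingredient being the elimination of the borderline case $N=4$. The key structural fact to exploit is that $A_\alpha$ is abelian: being $G_\alpha\cap\Delta(6,\mathbb C)$ it consists entirely of diagonal matrices, which commute. This commutativity is the constraint that cuts the admissible exponents down.

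First I would recall that a congruence $\ker U_\alpha$ forces $N(\alpha)=2^k$, as established by the index inequality preceding Lemma \ref{congruence}. In that situation Lemma \ref{congruence} gives $\ker U_\alpha=\Gamma(2^{k+2})$ and $A_\alpha\cong\Gamma(4)/\Gamma(2^{k+2})$. Since $A_\alpha$ is abelian, so is the quotient $\Gamma(4)/\Gamma(2^{k+2})$, and by the preceding lemma characterising abelianness of this quotient we conclude $k\in\{0,1,2\}$, i.e.\ $N(\alpha)\in\{1,2,4\}$. Thus at most these three values can yield a congruence group.

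It remains to exclude $N=4$. Here the abelian criterion is not enough, since it admits $k=2$, so a separate argument is needed. I would use the relation \eqref{gen}, $U_\alpha(g_5)=U_\alpha(g_3)$, which forces $g_3^{-1}g_5\in\ker U_\alpha$. Were $N=4$ to give a congruence group, Lemma \ref{congruence} would identify $\ker U_\alpha$ with $\Gamma(16)$, so $g_3^{-1}g_5$ would have to lie in $\Gamma(16)$. A direct reduction modulo $16$ of the explicit generators in \eqref{sdf2} shows $g_3^{-1}g_5\equiv\begin{pmatrix}1&8\\8&1\end{pmatrix}\pmod{16}$, which is not $\equiv\pm\,\mathrm{Id}\pmod{16}$ and hence not in $\Gamma(16)$. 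This contradiction rules out $N=4$, leaving $N(\alpha)\in\{1,2\}$.

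The main obstacle is precisely this last step: the commutativity of $A_\alpha$ by itself stops at $k\le 2$, so one must produce an explicit element of $\ker U_\alpha$ witnessing that $\Gamma(16)$ is too small. The coincidence $U_\alpha(g_5)=U_\alpha(g_3)$ among the induced images of the $\Gamma(4)$-generators supplies exactly such a witness; everything else is routine bookkeeping with the index, level, and abelianness data already in hand.
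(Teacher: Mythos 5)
Your proof is correct and takes essentially the same route as the paper's: the index inequality forces $N(\alpha)=2^k$, the abelianness of $A_\alpha\cong\Gamma(4)/\Gamma(2^{k+2})$ restricts to $k\le 2$, and the explicit element $g_3^{-1}g_5\equiv\begin{pmatrix}1&8\\8&1\end{pmatrix}\pmod{16}$, which lies in $\ker U_\alpha$ by \eqref{gen} but not in $\Gamma(16)$, eliminates $N=4$. Nothing is missing.
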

Let then  $\alpha_1$ and $\alpha_2$ denote the $\alpha$-values for which $N(\alpha_1)=1$ and $N(\alpha_2)=2$, respectively. We are going to prove $\ker U_{\alpha_1}$ and $\ker U_{\alpha_2}$ are indeed congruence groups. To this end recall that $\Gamma(4)/\ker U_\alpha\cong 
A_\alpha$. Since $A_{\alpha_1}$ is the trivial group, $\ker U_{\alpha_1}=\Gamma(4)$  and hence is a congruence group. \newline
It remains to prove the congruence property of $\ker U_{\alpha_2}$. Since $N(\alpha_2)=2$ and 
$U_{\alpha_2}(\Gamma(4))=A_{\alpha_2}$, it follows that $(U_{\alpha_2}(g))^2= id$ for all $g\in \Gamma(4)$ and hence $g^2\in \ker U_{\alpha_2}$ for all $g\in \Gamma(4)$. But $g^2\in \Gamma(8)$ for $g\in \Gamma(4)$. Therefore also the group $\langle g^2, g\in 
\Gamma(4)\rangle$ generated by $\Gamma(4)^2$ belongs to $\ker U_{\alpha_2}$ and hence $\ker U_{\alpha_2}\cap \Gamma(8)\not=\emptyset$.  Next we will show that the groups $\Gamma(8)$ and $\ker U_{\alpha_2}$  coincide. To this end we 
note that $A_{\alpha_2}\cong C_2\times C_2\times C_2$ where $C_2$ is the cyclic group of order $2$. But $A_{\alpha_2}\cong \Gamma(4)/\ker U_{\alpha_2}$ under the following well known natural group isomorphism $\imath_1:\Gamma(4)/\ker U_{\alpha_2}\rightarrow A_{\alpha_2}$: 
\begin{equation}
\imath_1( g \ker U_{\alpha_2})= U_{\alpha_2}(g).
\end{equation} 
Thereby the generators $A_i(\alpha_2), 1\leq i \leq 3$ of the group $A_{\alpha_2}$ in (\ref{genA1})-(\ref{genA3}) are mapped to the generators $g_i \ker U_{\alpha_2}, 1\leq i \leq 3$ of the group $\Gamma(4)/\ker U_{\alpha_2}$ with the $\{g_i\}$ as given in (\ref{sdf2}). Indeed, from equation (\ref{gen}) it follows that $g_3=g_5\mod \ker U_{\alpha_2}$ and $g_4=g_2^{-1} g_1^{-1} \mod \ker U_{\alpha_2}$.
On the other hand, it is known  \cite{McQuillan}, that $\Gamma(4)/\Gamma(8)$ is also isomorphic to $C_2\times C_2\times C_2$. Indeed, the elements $g_i \Gamma(8), 1\leq i\leq 3$ with $\{g_i, 1\leq i\leq 5\}$ defined in (\ref{sdf2}), are generators of the group $\Gamma(4)/\Gamma(8)$: we know that the five elements $g_i,\, 1\leq i\leq 5,$ generate the group $\Gamma(4)$ and fulfill $g_i^2= id \mod \Gamma(8)$. Furthermore one checks easily that $g_3=g_5\mod \Gamma(8)$ and $g_4=g_2^{-1} g_1^{-1}\mod \Gamma(8)$. Therefore the following map of their generators defines an  isomorphism $\imath$ of the two groups $\Gamma(4)/\ker U_{\alpha_2}$ and $\Gamma(4)/\Gamma(8)$
\begin{equation}\label{iso-factor}
\imath:\Gamma(4)/\ker U_{\alpha_2}\rightarrow\Gamma(4)/\Gamma(8)
\end{equation}
defined by 
\begin{equation}\label{isom.1}
\imath(g_i\ker U_{\alpha_2})=g_i\Gamma(8)
\end{equation}
Indeed $\imath (g_i\ker U_{\alpha_2}  g_j\ker U_{\alpha_2})=\imath (g_1 g_2\ker U_{\alpha_2})=g_i g_j \Gamma(8)=g_i \Gamma(8)g_j \Gamma(8)$. Since any $g\in \Gamma(4)$ can be expressed both $\mod \ker U_{\alpha_2}$ and $\mod \Gamma(8)$ in terms of the generators $g_i, 1\leq i\leq 3$, this implies for all $g\in\Gamma(4)$ $\imath(g\ker U_{\alpha_2})=g\Gamma(8)$. For $g\in \ker U_{\alpha_2}$ this implies necessarily $g\in \Gamma(8)$, that means $\Gamma(8)\geq \ker U_{\alpha_2}$. Similar arguments as in lemma \ref{congruence} then imply, that the two groups must coincide. 
This shows
\begin{corollary}
The kernel $\ker U_{\alpha_2}$ is given by $\Gamma(8)$ and hence $U_{\alpha_2}$ is a congruence representation.
\end{corollary}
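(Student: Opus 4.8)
The plan is to establish the two inclusions $\ker U_{\alpha_2}\leq\Gamma(8)$ and $\Gamma(8)\leq\ker U_{\alpha_2}$ separately and then conclude equality; the congruence assertion is immediate afterwards, since $\Gamma(8)$ is by definition a principal congruence subgroup. First I would extract the inclusion $\ker U_{\alpha_2}\leq\Gamma(8)$ directly from the generator-matching isomorphism $\imath$ constructed just above: because $\imath(g_i\ker U_{\alpha_2})=g_i\Gamma(8)$ holds on the three generators $g_1,g_2,g_3$ of $\Gamma(4)$, and because every $g\in\Gamma(4)$ reduces modulo either normal subgroup to a word in these generators (using $g_3=g_5$ and $g_4=g_2^{-1}g_1^{-1}$, valid mod both subgroups by \eqref{gen}), the identity $\imath(g\ker U_{\alpha_2})=g\Gamma(8)$ propagates to all of $\Gamma(4)$. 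Hence any $g\in\ker U_{\alpha_2}$ is sent to the trivial coset, which forces $g\in\Gamma(8)$.

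For the opposite inclusion I would avoid a direct membership computation and instead count indices, exactly as in the proof of Lemma~\ref{congruence}. Since $N(\alpha_2)=2$, the isomorphism $\Gamma(4)/\ker U_{\alpha_2}\cong A_{\alpha_2}$ of Corollary~\ref{iso2} together with $|A_\alpha|=N^3$ from Theorem~\ref{theorem:inf} gives $[\Gamma(4):\ker U_{\alpha_2}]=8$, while a direct evaluation of the index formula for principal congruence subgroups gives $[\Gamma(4):\Gamma(8)]=8$ as well. Having already shown $\ker U_{\alpha_2}\leq\Gamma(8)\leq\Gamma(4)$, the equality of these two finite indices forces $\ker U_{\alpha_2}=\Gamma(8)$, and then $U_{\alpha_2}$ is a congruence representation.

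The step I expect to be the real obstacle is not any arithmetic but the logical point underlying the first inclusion. An abstract isomorphism $\Gamma(4)/\ker U_{\alpha_2}\cong C_2\times C_2\times C_2\cong\Gamma(4)/\Gamma(8)$ says nothing on its own about whether one subgroup is nested in the other. What does the work is the specific, generator-respecting assignment $g_i\ker U_{\alpha_2}\mapsto g_i\Gamma(8)$, and the crux is to verify that this map is well defined, i.e.\ consistent with the defining relations of the common quotient $C_2\times C_2\times C_2$ (each $g_i$ squaring to the identity modulo both subgroups, and the two redundant generators obeying the same expressions in both quotients). Once this compatibility is secured, the first inclusion follows, and the index comparison closes the argument without further difficulty.
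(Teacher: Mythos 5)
Your proposal is correct and takes essentially the same route as the paper: the inclusion $\ker U_{\alpha_2}\leq\Gamma(8)$ is extracted from the generator-matching isomorphism $\imath$ exactly as in the text preceding the corollary, and the reverse inclusion via the index count $[\Gamma(4):\ker U_{\alpha_2}]=|A_{\alpha_2}|=8=[\Gamma(4):\Gamma(8)]$ is precisely what the paper means by ``similar arguments as in Lemma~\ref{congruence}.'' Your explicit insistence on checking that the assignment $g_i\ker U_{\alpha_2}\mapsto g_i\Gamma(8)$ is well defined (both quotients being elementary abelian of order $8$ with the images of $g_1,g_2,g_3$ as bases and $g_4,g_5$ reducing identically in both) is a welcome sharpening of a point the paper treats briskly, but it is not a different argument.
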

From the definition of the generators of $A_\alpha$ in \eqref{genA1}, \eqref{genA2}, and \eqref{genA3} it is clear that $N(\alpha_1)=1$ iff
$8\pi i\alpha_1=2\pi i k$ iff $\alpha_1=(1/4) k$ with $k\in \mathbb Z$. Moreover, $N(\alpha_2)=2$ iff $8\pi i\alpha_2=\pi i k$ iff $\alpha_2=(1/8) k$ with $k\in \mathbb Z$ and $(k,2)=1$.

Summarizing our discussion of the congruence properties of the kernels $\ker U_\alpha$ we have 
\begin{theorem}\label{jkjkp}
The representation $U_\alpha, 0\leq\alpha\leq1/2$, defined in \eqref{rep.-U}  is congruence  only for the $\alpha$-values $0,\frac{1}{8},\frac{2}{8},\frac{3}{8},\frac{4}{8}$. Moreover we have
\begin{equation}
\ker U_0=\ker U_{\frac{2}{8}}=\ker U_{\frac{4}{8}}=\Gamma(4),
\end{equation}
respectively
\begin{equation}
\ker U_{\frac{1}{8}}=\ker U_{\frac{3}{8}}=\Gamma(8).
\end{equation}
\end{theorem}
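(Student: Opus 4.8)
The plan is to read off Theorem \ref{jkjkp} as a bookkeeping consequence of the results already assembled, the only genuine computation being the translation of the admissible values $N(\alpha)\in\{1,2\}$ into explicit rationals in the range $0\le\alpha\le\tfrac12$. First I would dispose of the irrational and large-$N$ cases. For irrational $\alpha$ we have already noted, right after \eqref{indexalpha}, that the index formula $\mu(\alpha)=24N(\alpha)^3$ of Theorem \ref{theorem:inf} forces $\ker U_\alpha$ to have infinite index in ${\rm PSL}(2,\mathbb Z)$, so such $\alpha$ are excluded. For rational $\alpha$ the Corollary established just above (asserting that $\ker U_\alpha$ can be congruent only when $N(\alpha)\in\{1,2\}$) leaves exactly two cases, and the two subsequent statements identify the kernels in precisely these cases: $N(\alpha)=1$ makes $A_\alpha$ trivial and hence $\ker U_\alpha=\Gamma(4)$, while $N(\alpha)=2$ gives $\ker U_{\alpha}=\Gamma(8)$.

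It then remains to determine which $\alpha\in[0,\tfrac12]$ realise $N=1$ and which realise $N=2$. Writing $\alpha=p/q$ with $(p,q)=1$, the order of the generators $A_i(\alpha)$ in \eqref{genA1}--\eqref{genA3}, each built from $\exp(8\pi i\alpha)$, is $N(\alpha)=\min\{n:q\mid 4n\}$. Thus $N(\alpha)=1$ exactly when $q\mid 4$, i.e.\ $q\in\{1,2,4\}$; taking $(p,q)=1$ and $0\le\alpha\le\tfrac12$ this yields $\alpha\in\{0,\tfrac14,\tfrac12\}=\{0,\tfrac{2}{8},\tfrac{4}{8}\}$. Likewise $N(\alpha)=2$ exactly when $q\mid 8$ but $q\nmid 4$, i.e.\ $q=8$, which under the same restrictions yields $\alpha\in\{\tfrac18,\tfrac38\}$. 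Combining these lists with the kernel identifications of the preceding corollaries produces the two displayed equalities and shows that these five values are the only congruent ones.

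The step requiring the most care is this final enumeration: one must keep track both of the coprimality condition $(p,q)=1$ and of the restriction $0\le\alpha\le\tfrac12$, so as neither to miss a value nor to double-count. The slightly delicate point is that $N(\alpha)=2$ singles out precisely $q=8$ rather than every divisor of $8$, since $q\in\{1,2,4\}$ already gives $N=1$. Everything structurally heavier --- ruling out $N\neq 2^k$ via the index inequality, excluding $\Gamma(16)$ through the relation $g_3^{-1}g_5$, and pinning down $\ker U_{\alpha_2}=\Gamma(8)$ by the isomorphism \eqref{iso-factor} --- has already been carried out in the lemmas and corollaries preceding the theorem, so no new argument is needed and the proof is essentially a summary.
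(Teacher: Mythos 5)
Your proposal is correct and follows essentially the same route as the paper: the theorem is indeed assembled from the preceding corollaries (congruence forces $N(\alpha)\in\{1,2\}$, with $\ker U_\alpha=\Gamma(4)$ resp.\ $\Gamma(8)$ in those cases), plus the final enumeration of the rationals in $[0,\tfrac12]$ realising $N=1$ and $N=2$, which you carry out exactly as the paper does via the order of $\exp(8\pi i\alpha)$. Your explicit bookkeeping of the denominators ($q\in\{1,2,4\}$ versus $q=8$) matches the paper's statement that $N(\alpha)=1$ iff $\alpha=k/4$ and $N(\alpha)=2$ iff $\alpha=k/8$ with $(k,2)=1$.
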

This obviously implies the well known result of Newman et al. on the congruence properties of the character $\chi_\alpha$. Contrary to the latter case, where the principal congruence groups $\Gamma(2d),\,d=1,2,4,8$ appear as subgroups for the congruence character $\chi_
\alpha$, for the induced representation $U_\alpha$ only the two groups $\Gamma(4)$ and $\Gamma(8)$ are related to the congruence properties of its representations. The resulting noncongruence groups on the other hand  are of completely different nature in the two cases. It would be interesting to see if something similar happens also for the induced representations of other characters, like for instance the $\chi_n$  studied in \cite{LLY}, \cite{ALL} and \cite{Long} for the congruence group $\Gamma^1(5)$. 
\bibliography{congruence_2}{}
\bibliographystyle{plain}
\end{document}